\title{Stationary and convergent strategies\\
 in Choquet games}
\date{April 27, 2009\\[3pt] Revised January 12, 2010}
\author{%
Fran{\c c}ois G. Dorais\\
{\normalsize \texttt{dorais@umich.edu}}
\and
Carl Mummert\\
{\normalsize \texttt{mummertc@marshall.edu}}
}
\newcounter{ctr}[section]
\theoremstyle{plain}
\newtheorem{theorem}[ctr]{Theorem}
\newtheorem{proposition}[ctr]{Proposition}
\newtheorem{corollary}[ctr]{Corollary}
\newtheorem{question}[ctr]{Question}
\newtheorem{example}[ctr]{Example}
\theoremstyle{definition}
\newtheorem{definition}[ctr]{Definition}
\newcommand{\EMPTY}{\textsc{Empty}}
\newcommand{\POINT}{\textsc{Nonempty}}
\newcommand{\SCG}{\mathsf{Ch}}
\newcommand{\Strategy}{\mathfrak{S}}
\newcommand{\Basis}{\mathcal{B}}
\newcommand{\set}[1]{\lbrace#1\rbrace}
\newcommand{\seq}[1]{\langle #1\rangle}
\newcommand{\h}[1]{\hat{#1}}
\newcommand{\wh}[1]{\widehat{#1}}
\newcommand{\hx}{\h{x}}
\newcommand{\hy}{\h{y}}
\newcommand{\hz}{\h{z}}
\newcommand{\hl}{\h{l}}
\newcommand{\hU}{\wh{U}}
\newcommand{\hV}{\wh{V}}
\begin{document}

\maketitle

\begin{abstract}
  If \POINT\ has a winning strategy against \EMPTY\ in the
  Choquet game on a space, the space is said to be a
  Choquet space.  Such a winning strategy allows
  \POINT\ to consider the entire finite history of previous
  moves before making each new move; a stationary strategy only 
  permits \POINT\ to consider the previous move by \EMPTY. 
  We show that \POINT\ has a stationary winning strategy for 
  every second countable $T_1$ Choquet space. More generally, 
  \POINT\ has a stationary winning strategy for any $T_1$ Choquet
  space with an open-finite basis.
  
  We also study convergent strategies for the Choquet game,
  proving the following results. %
%
  A $T_1$ space $X$ is the open continuous 
  image of a complete metric space if and only if \POINT\ 
  has a convergent winning strategy in the Choquet game
  on~$X$.  
%
  A $T_1$ space $X$ is the
  open continuous compact image of a metric space if and only if $X$ is
  metacompact and \POINT\ has a stationary convergent strategy in
  the Choquet game on~$X$. 
%
A $T_1$ space $X$ is the
  open continuous compact image of a complete metric space if and only if $X$ is
  metacompact and \POINT\ has a stationary convergent winning strategy in
  the Choquet game on~$X$. 

\vspace{\baselineskip}
\noindent \textbf{Mathematics subject classification:} Primary
90D42, 54D20; \\
Secondary 06A10, 06B35
\end{abstract}
\newpage

\section{Introduction}
The \emph{Choquet game} is a Gale--Stewart game, denoted
$\SCG(X)$, associated to a topological space~$X$. There are
two players, \EMPTY\ and \POINT, who alternate turns for
$\omega$ rounds. On round~$i$, \EMPTY\ moves first, choosing
$x_i \in X$ and an open set $U_i$ such that $x_i \in U_i$
and, if $i \geq 1$, such that $U_i \subseteq V_{i-1}$.
Then, \POINT\ responds with an open set $V_i$ such that $x_i
\in V_i \subseteq U_i$.  After all the rounds have been played,
\EMPTY\ wins if $\bigcap_{n<\omega} V_n =
\varnothing$. Otherwise, \POINT\ wins.\footnote{%
  The names of the two players \EMPTY\ and \POINT\ vary
  widely in the literature. They are sometimes called $0$
  and $1$, or $1$ and $2$, or $\beta$ and $\alpha$,
  respectively. The Choquet game described here is sometimes
  called the \textit{strong Choquet game.}}

A \textit{winning strategy} for \POINT\ is a function $\Strategy$
which takes a partial play of the game, ending with a move
by \EMPTY, and returns an open set for \POINT\ to play, such
that \POINT\ will win any play of the game that follows the
strategy.  If \POINT\ has a winning strategy for $\SCG(X)$
then $X$ is said to be a \textit{Choquet space}.  

The Choquet game was originally applied by
Choquet~\cite{Choquet} to characterize complete
metrizability of metric spaces. It has since
been used more broadly to characterize completeness in
arbitrary spaces.  Kechris~\cite{Kechris-CDST} describes
the Choquet game, and its applications to descriptive
set theory, in detail.

We are interested in the existence of two special types of
strategies for \POINT: stationary strategies and convergent
strategies.

\begin{definition}\label{D:positional}
  A \emph{stationary strategy} for \POINT\ in the game $\SCG(X)$ is a
  strategy $\Strategy$ 
that treats each move as the first move. That is, 
  \begin{equation*}
    \Strategy(\seq{x_j,U_j}_{j \leq i}) = \Strategy(\seq{x_i,U_i})
  \end{equation*}
  for every partial play $\seq{x_j,U_j}_{j \leq i}$ of \EMPTY\
  against~$\Strategy$.\footnote{%
    Stationary strategies are also called \textit{positional} or
    \textit{memoryless} strategies, or \textit{tactics}.} 
\end{definition}

\begin{definition}
  \label{D:convergent}
  A \emph{convergent strategy} for \POINT\ in the game
  $\SCG(X)$ is a strategy $\Strategy$ such that, for any
  play $\seq{x_i, U_i, V_i}_{i < \omega}$ following $\Strategy$,
  the collection $\{V_i : i < \omega \}$ is a neighborhood
  basis for any point in $\bigcap_i V_i$.
\end{definition}
  
We do not require a stationary strategy or convergent
strategy to be a winning strategy for \POINT. It is not
hard to see that any $T_0$ space $X$ in which \POINT\ has a
convergent strategy in $\SCG(X)$ is a $T_1$ space; thus some of our
results have equivalent restatements in which ``$T_1$'' is
replaced with ``$T_0$''.

Galvin~and Telg{\'a}rsky~\cite{GalvinTelgarsky86} studied
stationary strategies for generalized Choquet
games. These games, and the results of Galvin~and
Telg{\'a}rsky, are discussed in Section~\ref{S:generalized}.

It is well known that if $X$ is a complete metric space then
\POINT\ has a convergent stationary winning strategy for
$\SCG(X)$. This is already implicit in the results of
Choquet~\cite{Choquet}, where only stationary strategies are
considered.  However, the class of spaces for which \POINT\ 
has a stationary winning strategy has not been
characterized, and the class of spaces for which \POINT\ has
a convergent strategy is also uncharacterized.

Martin~\cite{Martin-TGDT} proved
that \POINT\ has a winning strategy for every space that is
representable as the set of maximal points of a
directed-complete partial order (d.c.p.o.)\ with the Scott
topology. Such a space is said to be \textit{domain
  representable} and must always be~$T_1$.  Martin
established special cases when \POINT\ has a stationary
winning strategy, and asked whether there is always a
stationary winning strategy. Our first theorem establishes
that \POINT\ has a stationary winning strategy for every
second-countable domain representable space.

\begin{theorem}
\label{T:second-countable-strategy}
  If $X$ is a second-countable $T_1$ space and \POINT\ has a
  winning strategy in $\SCG(X)$ then \POINT\ has a
  stationary winning strategy in~$\SCG(X)$.
\end{theorem}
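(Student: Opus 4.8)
The plan is to start from an arbitrary winning strategy $\Strategy$ for \POINT\ in $\SCG(X)$ together with a fixed countable basis $\Basis=\set{B_n:n<\omega}$, and to compress the history-dependence of $\Strategy$ down to something that can be read off a single move, using second countability in an essential way.

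First I would normalize the game so that only basic open sets occur. If \EMPTY\ plays an arbitrary legal $(x_i,U_i)$, \POINT\ may ``round down'' and react as though \EMPTY\ had played $(x_i,B_n)$ for the least $n$ with $x_i\in B_n\subseteq U_i$; and whenever the strategy being simulated calls for an open set $V$, \POINT\ instead plays any basic open set lying between \EMPTY's following move and $V$. Because along any run the sets played by \EMPTY, the sets dictated by the simulated strategy, and \POINT's actual basic moves are pairwise cofinal, all three families have the same intersection, so rounding down changes neither who wins nor what \POINT\ can guarantee. It therefore suffices to produce a \emph{stationary} winning strategy in the game $\SCG_\Basis(X)$ in which \EMPTY\ plays a point together with a basic open set and \POINT\ plays a basic open set; such a strategy pulls back to a stationary winning strategy for $\SCG(X)$ by the same rounding-down, and in the other direction the hypothesis supplies \POINT\ with an ordinary winning strategy in $\SCG_\Basis(X)$ by the same simulation.

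Now fix a winning strategy $\Strategy$ for $\SCG_\Basis(X)$ and form its strategy tree $T$: the tree of finite plays in which \POINT\ follows $\Strategy$, each node recording \EMPTY's last point, \EMPTY's last basic open set, and \POINT's basic-open response. The idea is to attach to (almost) every basic open set $B$ occurring as a \POINT-move a single canonical node $\nu(B)\in T$ at which it is played, and then to define the stationary strategy thus: seeing $(x_i,B_{n_i})$ with $B_{n_i}$ contained in \POINT's previous move $B$, extend the play $\nu(B)$ by letting \EMPTY\ play $(x_i,B_{n_i})$, let $\Strategy$ respond with some basic open set $B'$, play $B'$, and declare the node just reached to be $\nu(B')$. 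For this to be self-consistent one needs each basic open set to be produced as a \POINT-move along at most one branch, so that $B$ genuinely decodes the branch leading to it; this one arranges by re-enumerating $\Basis$ and forcing \POINT\ always to play a basic open set whose index codes the finite history of indices played so far (together with whatever else is needed to locate the position in $T$). One then checks that every branch of the resulting tree still has nonempty intersection, so the stationary strategy wins.

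The step I expect to be the real obstacle is exactly this coding, because the finite history on which $\Strategy$ may legitimately depend contains the points \EMPTY\ has already played, and these range over the (possibly uncountable) space $X$, so there is no naive way to pack such a history into the index of a single basic open set. This is precisely where second countability must be used: one must first cut the bookkeeping down to a countable amount of data --- plausibly by replacing \POINT's strategy with one whose response depends only on \EMPTY's \emph{current} point and on the finite sequence of basic open sets played so far, with \EMPTY's earlier points systematically replaced by canonical points drawn from a countable dense set (which exists because $X$ is second countable), and then verifying that such surgery preserves legality and winning despite the fact that the current point cannot be so replaced. Only after this reduction is the remaining history a codeable finite sequence of indices, and the coherence-and-freshness construction of the previous paragraph goes through. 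I would thus treat the point-reduction lemma, and the simultaneous recursion defining the re-indexed basis and the stationary strategy, as the technical heart of the argument; the surrounding reductions are routine cofinality manipulations. (An alternative, possibly cleaner, route would be to use \POINT's winning strategy to build a \emph{countable} poset whose space of maximal filters, suitably topologized, is homeomorphic to $X$, and then read off the stationary strategy directly as ``given a point and a basic neighborhood, drop to the least basis element below it to which the point belongs''; the work then migrates into constructing that poset, and the essential use of second countability is precisely that it can be taken countable.)
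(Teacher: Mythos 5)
Your overall architecture (restrict to a countable basis, tame the dependence on \EMPTY's earlier points, then collapse the remaining history-dependence) parallels the paper's, but the collapsing step as you describe it cannot be executed, and the idea that repairs it is exactly the one missing from your sketch. A stationary strategy, as defined here, receives only \EMPTY's current move $\seq{x_i,U_i}$; it does \emph{not} receive \POINT's own previous move $B=V_{i-1}$. Your construction needs to locate the node $\nu(B)$, i.e.\ to recover $B$, but from $U_i$ alone this is impossible: in a general countable basis $U_i$ has infinitely many basis supersets, and many distinct plays against the strategy can arrive at positions where $\seq{x_i,U_i}$ is a legal next move, each with a different $V_{i-1}$. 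Arranging that each basic set is the \POINT-response of at most one branch does not help, because the decoding key is $V_{i-1}$, not $U_i$, and $U_i$ is chosen by \EMPTY. The paper's resolution is Proposition~\ref{P:T1-2ctble->open-finite}: every second-countable $T_1$ space has a countable \emph{open-finite} basis, in which each basic set has only finitely many supersets in the basis. With such a basis only finitely many ``open traces'' (sets of pairs of previously played basic sets) can precede a legal move $\seq{x,U}$, since every set occurring in such a trace is a basis superset of $U$; the stationary response is then a basic neighborhood of $x$ contained in the intersection of the finitely many candidate responses. This is the ``good triple'' mechanism of Proposition~\ref{P:good-triples} and Theorem~\ref{T:open-finite-stationary}: no unique decoding of the history is needed, and none is available.

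The point-reduction step you rightly flag as the technical heart is also not carried out correctly as sketched. Replacing \EMPTY's earlier points by members of a countable dense set is not known to preserve the strategy's behaviour: the substituted points must lie in the right open sets \emph{and} must elicit from the original strategy the very responses already recorded, and density gives no control over the latter. The paper's Theorem~\ref{T:trace-strategy} instead fixes, for each open trace, a witness point $y_n$ chosen from a maximal family $Y\subseteq U_n$ on which $y\mapsto V_y=\Strategy(\dots;y,U_n)$ is injective (so that $U_n=\bigcup_{y\in Y}V_y$ and some $V_y$ contains $x_n$), after a preliminary ``stabilization'' handling repeated moves; this is what makes the response a function of $(x_n,U_n)$ and the set of previously played open sets alone. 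So the two genuinely hard steps are the open-finite refinement of the basis and the trace-strategy construction; neither appears in your argument, and the coding device you propose in their place cannot be implemented by a strategy that sees only \EMPTY's last move.
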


Mummert~and Stephan~\cite{MS-TAPS} showed that the
second-countable $T_1$ Choquet spaces are precisely the
second-countable domain representable spaces, and these have
an equivalent characterization in terms of representability
by spaces of maximal filters called \textit{MF spaces}.
Combining this result with Martin's result and
Theorem~\ref{T:second-countable-strategy} gives the
following corollary.

\begin{corollary}\label{C:represent}
  Let $X$ be a second-countable $T_1$ space. Then \POINT\
  has a stationary winning strategy in $\SCG(X)$ if and only
  if $X$ is domain representable \textup{(}equivalently, if and only
  if $X$ is representable as an MF space\textup{)}. 
\end{corollary}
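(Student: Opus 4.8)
The plan is to deduce the corollary by combining three facts already recorded above: Martin's theorem that \POINT\ has a winning strategy in $\SCG(X)$ whenever $X$ is domain representable; Theorem~\ref{T:second-countable-strategy}, which upgrades an arbitrary winning strategy to a stationary one in the second-countable $T_1$ setting; and the Mummert--Stephan result identifying, among second-countable $T_1$ spaces, the Choquet spaces with the domain representable spaces and with the spaces representable as MF spaces. The argument is a short round trip through these equivalences, so the proof amounts to bookkeeping rather than new construction.

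First I would prove the reverse implication. Assume $X$ is domain representable. By Martin's theorem \POINT\ has a winning strategy in $\SCG(X)$; note that this step needs no countability hypothesis. Since $X$ is in addition second-countable and $T_1$, Theorem~\ref{T:second-countable-strategy} converts this winning strategy into a stationary winning strategy for \POINT\ in $\SCG(X)$, as desired.

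For the forward implication, suppose \POINT\ has a stationary winning strategy in $\SCG(X)$. A stationary winning strategy is in particular a winning strategy, so $X$ is a Choquet space. By the Mummert--Stephan result, a second-countable $T_1$ Choquet space is domain representable, and hence $X$ is domain representable. This closes the equivalence between the first two clauses.

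Finally, the parenthetical equivalence with MF-representability is immediate from the same Mummert--Stephan theorem, which asserts that among second-countable $T_1$ spaces the domain representable spaces coincide with those representable as MF spaces; substituting this equivalence into either clause above yields the third. The only place where genuine work is hidden is Theorem~\ref{T:second-countable-strategy}, proved separately; granting it, there is no real obstacle here, and the main point to verify is merely that Martin's implication (domain representable $\Rightarrow$ winning strategy) and the Mummert--Stephan implication (second-countable $T_1$ Choquet $\Rightarrow$ domain representable) point in compatible directions, which they plainly do.
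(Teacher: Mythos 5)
Your proposal is correct and is exactly the argument the paper intends: the paper derives the corollary by combining Martin's theorem, Theorem~\ref{T:second-countable-strategy}, and the Mummert--Stephan equivalence in just the way you describe. No gaps; the bookkeeping is right and the implications point in the directions you need.
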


Bennett, Lutzer, and~Reed~\cite{Bennett2008445} established
that a broad class of $T_3$ Choquet spaces admit stationary
winning strategies for \POINT. They asked \cite[Question
5.2]{Bennett2008445} if there is an example of a $T_3$
domain representable space such that \POINT\ does not have a
stationary winning strategy.  Corollary~\ref{C:represent}
shows that such an example, if it exists, cannot be
second countable.

The results in this paper do not assume separation beyond
the $T_1$ axiom. Thus the results here are not a consequence
of those of Bennett \textit{et al.}, because there are
second-countable Hausdorff Choquet spaces that are
nonmetrizable and thus not $T_3$. One example is the
Gandy--Harrington space.  The standard proof that the
Gandy--Harrington space is a Choquet space, as presented by
Kechris~\cite{Kechris-CDST}, does not produce a stationary
strategy for \POINT, but
Theorem~\ref{T:second-countable-strategy} implies that there
is such a strategy.

In section~\ref{S:convergent}, we study convergent
strategies for \POINT\ in the Choquet game. We obtain
several theorems:

\begin{itemize}
\item (Theorem~\ref{T:appl-oimage}) A $T_1$ space $X$ is the
  open continuous image of a complete metric space if and only if
  \POINT\ has a convergent winning strategy in~$\SCG(X)$.
  
\item (Theorem~\ref{T:appl-okimage}) A $T_1$ space $X$ is
  the open continuous compact image of a metric space if and only if
  $X$ is metacompact and \POINT\ has a stationary convergent strategy
  in~$\SCG(X)$.
  
\item (Theorem~\ref{T:appl-okcimage}) A $T_1$ space $X$ is
  the open continuous compact image of a complete metric space if and
  only if $X$ is metacompact and \POINT\ has a stationary
  convergent winning strategy in~$\SCG(X)$.
\end{itemize}
Here, a space $X$ is an \textit{open continuous compact} image of a space $Y$
if there is an open continuous surjection $f \colon Y \to X$ such
that $f^{-1}(\set{x})$ is compact for each $x \in X$.

In light of these results, it is natural to ask whether a space
$X$ is the open continuous image of a metric space if and only if
\POINT\ has a convergent strategy in $\SCG(X)$.
In Example~\ref{E:noconvergent}, we show that 
there is a first-countable $T_1$ space~$X$ such that \POINT\ 
does not have a convergent strategy in~$\SCG(X)$.  
As Ponomarev~\cite{Ponomarev60} proved that every
first-countable $T_0$ space is the open continuous image of a
metric space, this example resolves the question with a
negative answer. 

\subsection*{Special bases}

Our techniques for constructing convergent and stationary
strategies require that the space being considered has bases
with specific order properties.  The study of such order
properties is well established in the literature.

\begin{definition}
Let $\Basis$ be a basis for a topological space $X$ and, for every $x
\in X$, let $\Basis_x = \set{U \in \Basis: x \in U}$ be the induced
neighborhood basis at~$x$. Then:
\begin{itemize}
\item $\Basis$ is \emph{Noetherian}~\cite{Lindgren} if it satisfies
  the ascending chain condition. That is, every non-decreasing
  sequence of basis elements is eventually constant.
\item $\Basis$ is \emph{open-finite}~\cite{Peregudov76B}
  if $\Basis[{\supseteq U}]$ is finite for every $U \in
  \Basis$. In other words, each set in the basis has only
  finitely many supersets in the basis.
\item $\Basis$ is \emph{uniform}~\cite{Aleksandrov60} if
  $\Basis_x[{\nsubseteq U}]$ is finite for every basic neighborhood
  pair $x \in U \in \Basis$. In other words, every infinite subset of
  $\Basis_x$ is a neighborhood basis at~$x$.
\item $\Basis$ is of \emph{countable
    order}~\cite{Arhangelskii63} if $\Basis_x[{\nsubseteq
    U}]$ satisfies the descending chain condition for every
  basic neighborhood pair $x \in U \in \Basis$. In other
  words, every infinite descending chain in $\Basis_x$ is a
  neighborhood basis at~$x$.
\end{itemize}
\end{definition}

\noindent It follows immediately from definitions that every uniform basis is
open-finite, and every open-finite basis is Noetherian.
Moreover, every uniform basis is of countable order.

Some of our results make use of the following classical theorems,
which indicate the utility of uniform bases.  Lindgren~and
Nyikos~\cite{Lindgren} attribute the first of these to 
Aleksandrov.

\begin{theorem}[Aleksandrov~\cite{Aleksandrov60}]
 \label{P:uniform->metacompact}
 A space has a uniform basis if and only if it is
developable and metacompact. 
\end{theorem}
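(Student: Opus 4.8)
The plan is to establish the two implications separately, since the three properties do not interact transparently. Assume first that $X$ is developable and metacompact. I would start from a development $\seq{\mathcal{G}_n}$, use metacompactness to replace each $\mathcal{G}_n$ by a point-finite open refinement $\mathcal{P}_n$, and then pass to the covers $\mathcal{H}_n=\set{P_0\cap\dots\cap P_n:P_i\in\mathcal{P}_i}$: these are again point-finite open covers, $\mathcal{H}_{n+1}$ refines $\mathcal{H}_n$, and $\operatorname{st}(x,\mathcal{H}_n)\subseteq\operatorname{st}(x,\mathcal{G}_n)$, so $\seq{\mathcal{H}_n}$ remains a development. Then I would verify that $\Basis=\bigcup_n\mathcal{H}_n$ is a uniform basis: it is a basis because some $\operatorname{st}(x,\mathcal{H}_n)$ lies inside any prescribed neighborhood of $x$ and is a union of members of $\mathcal{H}_n$ around $x$; and, given $x\in U\in\mathcal{H}_m$, if one picks $n$ with $\operatorname{st}(x,\mathcal{H}_n)\subseteq U$, then every member of $\Basis_x$ from a level at least $n$ is contained by refinement in a level-$n$ set around $x$, hence in $U$, whereas point-finiteness leaves only finitely many members of $\Basis_x$ from the levels below $n$; so $\Basis_x[{\nsubseteq U}]$ is finite.

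For the converse, assume $\Basis$ is a uniform basis. To get metacompactness, given an open cover $\mathcal{U}$ I would first pass to $\set{B\in\Basis:B\subseteq U\text{ for some }U\in\mathcal{U}}$, so that we may assume $\mathcal{U}\subseteq\Basis$ (a point-finite refinement of the new cover refines the old one), then well-order $X$ and recursively choose $V_\xi$ to be a member of $\mathcal{U}$ containing $x_\xi$ whenever $x_\xi\notin\bigcup_{\eta<\xi}V_\eta$, and $V_\xi=\varnothing$ otherwise. The nonempty $V_\xi$ form an open refinement of $\mathcal{U}$; and if some $y$ lay in infinitely many distinct $V_{\xi_0},V_{\xi_1},\dots$ with $\xi_0<\xi_1<\cdots$, these would be distinct members of $\Basis_y$, so uniformity applied to the pair $y\in V_{\xi_0}$ would force $V_{\xi_k}\subseteq V_{\xi_0}$ for all large $k$ --- contradicting that $x_{\xi_k}\in V_{\xi_k}$ was left uncovered at the later stage $\xi_k$. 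Hence this refinement is point-finite.

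To get developability, I would set $r(y,B)=|\Basis_y[{\nsubseteq B}]|$ for each basic neighborhood pair $y\in B\in\Basis$ (finite, by uniformity) and take $\mathcal{G}_n=\set{B\in\Basis:r(y,B)\geq n\text{ for every }y\in B}$. Each $\mathcal{G}_n$ is an open cover: for a point $x$ with $\Basis_x$ infinite, choose a strictly decreasing chain $C_0\supsetneq C_1\supsetneq\dots\supsetneq C_n$ in $\Basis_x$ (possible since $\Basis_x$ is infinite, using uniformity at each step to find a proper basic subset), and note that each step raises $r(y,\cdot)$ by at least one at every point $y$ of the smaller set --- the larger set being a member of $\Basis_y$ not contained in the smaller --- so $r(y,C_n)\geq n$ throughout $C_n$ and $C_n\in\mathcal{G}_n$; points with $\Basis_x$ finite have a smallest neighborhood (and are isolated when $X$ is $T_1$), which I would adjoin to every $\mathcal{G}_n$. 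Then $\seq{\mathcal{G}_n}$ is a development: for $x$ and open $U\ni x$, the set $\Basis_x[{\nsubseteq U}]$ is finite, so once $n$ exceeds $r(x,B)$ for each of its members $B$, none of those $B$ lies in $\mathcal{G}_n$, and hence every member of $\mathcal{G}_n$ containing $x$ lies in $U$; that is, $\operatorname{st}(x,\mathcal{G}_n)\subseteq U$.

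I expect this last construction to be the crux. There is no greedy shortcut as in the metacompactness argument, and the obvious attempt to stratify $\Basis$ by a rank read off from the inclusion order fails, since the maximal basic open sets persist at every level and so the stars never descend below them. The decisive point is to grade $\Basis$ by the counting function $r(y,B)$ and to demand that a basic set be deep at \emph{all} of its points before admitting it to $\mathcal{G}_n$; the finiteness clause in the definition of a uniform basis is precisely what makes $r$ finite and what forces the offending basic sets eventually out of $\mathcal{G}_n$.
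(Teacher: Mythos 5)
The paper does not prove this theorem; it is imported from the literature (Aleksandrov, via Lindgren--Nyikos), so there is no in-paper argument to compare yours against. Your proof is correct for $T_1$ spaces and is essentially the standard one: the forward direction by intersecting point-finite refinements of the development levels and then counting levels below the first good one; metacompactness by the greedy well-ordering argument; developability by grading $\Basis$ with the counting function $r(y,B)=|\Basis_y[{\nsubseteq B}]|$. The crux checks out: $\Basis_y[{\nsubseteq C_i}]\cup\set{C_i}\subseteq\Basis_y[{\nsubseteq C_{i+1}}]$ gives $r(y,C_{i+1})\geq r(y,C_i)+1$ at every point $y$ of the smaller set, so the chain argument shows each $\mathcal{G}_n$ covers the points with $\Basis_x$ infinite, and the final star computation ($n$ larger than $r(x,B)$ for the finitely many $B\in\Basis_x[{\nsubseteq U}]$ expels them all from $\mathcal{G}_n$) is right.

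The one substantive remark concerns your parenthetical ``(and are isolated when $X$ is $T_1$)'': it is not a throwaway. Without $T_1$ the minimal neighborhood $M_x=\bigcap\Basis_x$ of a point with $\Basis_x$ finite may contain other points $y$, and since you place $M_x$ in every $\mathcal{G}_n$, the star $\operatorname{st}(y,\mathcal{G}_n)$ never shrinks below $M_x$, so the construction genuinely breaks. And it must break: the Sierpi\'nski space $\set{0,1}$ with open sets $\varnothing$, $\set{1}$, $\set{0,1}$ has a finite, hence vacuously uniform, basis and is metacompact, but is not developable, since every open cover must contain $\set{0,1}$ to cover the point $0$ and therefore $\operatorname{st}(1,\mathcal{G})=\set{0,1}\nsubseteq\set{1}$ for every open cover $\mathcal{G}$. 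So the statement as printed is missing a separation hypothesis in the direction from uniform bases to developability; your proof supplies exactly the right one, $T_1$, which is the only setting in which the paper ever invokes the theorem.
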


\begin{theorem}[Arhangel$'$ski{\u\i}~\cite{Arhangelskii62}]
  \label{T:arhangelskii}
  Let $X$ be a $T_1$ space. Then $X$ has a uniform basis if and only
  if $X$ is the open continuous compact image of a metric space.
\end{theorem}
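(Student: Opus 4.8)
The plan is to prove both implications of Theorem~\ref{T:arhangelskii} by direct constructions, using Theorem~\ref{P:uniform->metacompact} to trade the hypothesis ``$X$ has a uniform basis'' for ``$X$ is developable and metacompact'' in the direction where that is more convenient. Consider first the implication that an open continuous compact image of a metric space has a uniform basis. Fix an open continuous surjection $f\colon M\to X$ with $M$ metric and every fiber $f^{-1}(\set x)$ compact. Since $M$ is metric, hence paracompact, choose for each $n$ a locally finite open cover $\mathcal U_n$ of $M$ all of whose members have diameter less than $2^{-n}$, and set $\Basis=\set{f(U):U\in\mathcal U_n,\ n\in\omega}$, which consists of open sets because $f$ is open. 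That $\Basis$ is a basis is routine: given $x\in V$ with $V$ open, pick $m\in f^{-1}(\set x)$ and then some $U\in\mathcal U_n$ containing $m$ with diameter small enough that $U\subseteq f^{-1}(V)$, so $x\in f(U)\subseteq V$. Uniformity is the substantial point, and this is where fiber-compactness enters, in two ways. Fix a basic pair $x\in U_0\in\Basis$. Because $f^{-1}(\set x)$ is compact and contained in the open set $f^{-1}(U_0)$, there is $r>0$ such that the $r$-neighborhood of $f^{-1}(\set x)$ in $M$ lies in $f^{-1}(U_0)$; hence for every $n$ with $2^{-n}<r$, each member of $\mathcal U_n$ meeting $f^{-1}(\set x)$ has image contained in $U_0$. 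For each of the finitely many remaining $n$, local finiteness of $\mathcal U_n$ together with compactness of $f^{-1}(\set x)$ shows that only finitely many members of $\mathcal U_n$ meet $f^{-1}(\set x)$. Since every $W\in\Basis$ with $x\in W$ is of the form $f(U)$ with $U$ meeting $f^{-1}(\set x)$, combining these shows $\Basis_x[{\nsubseteq U_0}]$ is finite, i.e.\ $\Basis$ is uniform.

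For the converse, suppose $X$ is $T_1$ with a uniform basis. By Theorem~\ref{P:uniform->metacompact}, $X$ has a development $\seq{\mathcal G_n}$ (a sequence of open covers with $\set{\mathrm{st}(x,\mathcal G_n):n\in\omega}$ a neighborhood basis at each $x$) and is metacompact; refine each $\mathcal G_n$ to a point-finite open cover $\mathcal W_n$, noting that $\mathrm{st}(x,\mathcal W_n)\subseteq\mathrm{st}(x,\mathcal G_n)$, so $\seq{\mathcal W_n}$ is still a development. I would take as domain the thread space $M=\set{(W_n)_{n\in\omega}\in\prod_n\mathcal W_n:\bigcap_n W_n\neq\varnothing}$, with the subspace topology inherited from the countable product of the discrete spaces $\mathcal W_n$; this product is metrizable, so $M$ is a metric space. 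If $(W_n)\in M$ and $p\in\bigcap_n W_n$, then $W_n\subseteq\mathrm{st}(p,\mathcal W_n)$ for every $n$, and since $\set{\mathrm{st}(p,\mathcal W_n):n\in\omega}$ is a neighborhood basis at $p$ in the $T_1$ space $X$, this forces $\bigcap_n W_n=\set p$; so I can define $f\colon M\to X$ by sending $(W_n)$ to the unique point of $\bigcap_n W_n$.

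It remains to verify that $f$ is a continuous open surjection with compact fibers. Surjectivity is immediate: for $p\in X$, choose $W_n\in\mathcal W_n$ with $p\in W_n$ for each $n$. Continuity holds because, given a thread $\sigma$ and an open $V$ containing $f(\sigma)=p$, any $N$ with $\mathrm{st}(p,\mathcal W_N)\subseteq V$ forces every thread agreeing with $\sigma$ on its first $N+1$ coordinates to have $f$-value in the $N$-th coordinate of $\sigma$, hence in $\mathrm{st}(p,\mathcal W_N)\subseteq V$. For openness, a basic open subset of $M$ has the form $O=\set{(V_n)\in M:V_n=W_n\text{ for }n\le N}$ with $W_0\cap\cdots\cap W_N\neq\varnothing$; every thread in $O$ has $f$-value in $W_0\cap\cdots\cap W_N$, while any $q\in W_0\cap\cdots\cap W_N$ extends to a thread of $O$ mapping to $q$ (take $W_k\in\mathcal W_k$ with $q\in W_k$ for $k>N$), so $f(O)=W_0\cap\cdots\cap W_N$ is open; since every open subset of $M$ is a union of such sets, $f$ is open. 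Finally, $f^{-1}(\set p)=\prod_n\set{W\in\mathcal W_n:p\in W}$ is a product of finite discrete spaces, by point-finiteness of the $\mathcal W_n$, hence compact; thus $X$ is the open continuous compact image of the metric space $M$.

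I expect the openness of $f$ in the converse to be the main obstacle, since it is where one must pin down exactly how the thread space projects onto $X$; the remaining points --- verifying that the point-finite refinements remain a development, and keeping the two uses of fiber-compactness straight in the forward direction --- should be routine. Note that the $T_1$ hypothesis is used precisely to make the intersections $\bigcap_n W_n$ singletons, so that $f$ is well defined, which matches its role in the statement.
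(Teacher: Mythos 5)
Your proof is correct. Note, however, that the paper does not prove Theorem~\ref{T:arhangelskii} at all: it is quoted as a classical result of Arhangel$'$ski{\u\i} and used as a black box (in Theorem~\ref{T:appl-okimage} and Theorem~\ref{T:appl-okcimage}), so there is no in-paper proof to compare against. Your argument is the standard one. In the forward direction, both uses of fiber-compactness are handled correctly: the positive distance from the compact fiber to the complement of $f^{-1}(U_0)$ disposes of all but finitely many levels $\mathcal U_n$, and local finiteness plus compactness of the fiber disposes of each remaining level. In the converse, your thread space $M\subseteq\prod_n\mathcal W_n$ with $f^{-1}(\set p)=\prod_n\set{W\in\mathcal W_n:p\in W}$ compact by point-finiteness is essentially the same device the paper itself deploys in the proof of Theorem~\ref{T:appl-okcimage} (the space $S$ and the fiber bound via $S_x$), except that there the sequences are required to be descending and generated by a strategy, which is what buys completeness; since you only need a metric domain, your unconstrained threads suffice, and the $T_1$ hypothesis is indeed used exactly where you say, to make $\bigcap_n W_n$ a singleton so that $f$ is well defined.
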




The previous two theorems show that the existence of a uniform basis
cannot be demonstrated for general second-countable spaces, as there
are non-metacompact second-countable spaces, which can even be
completely Hausdorff~\cite[\#69]{SS-Cit}.  Our motivation for studying
open-finite bases is that these can be obtained in very general 
circumstances.

\begin{proposition}\label{P:T1-2ctble->open-finite}
  Every second countable $T_1$ space has a countable open-finite
  basis.
\end{proposition}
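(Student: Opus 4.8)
The plan is to realize the desired basis as a countable union $\mathcal{B}=\bigcup_{n<\omega}\mathcal{U}_n$ of point-finite families of open sets, arranged into ``levels'' so that (a) each $\mathcal{U}_n$ is an antichain under inclusion (no member contains another) and (b) $\mathcal{U}_{n+1}$ refines $\mathcal{U}_n$, i.e.\ every member of $\mathcal{U}_{n+1}$ is contained in some member of $\mathcal{U}_n$. Granting such a family that is also a basis, open-finiteness is immediate: if $V\in\mathcal{U}_n$, then for each $m\le n$ there are only finitely many members of $\mathcal{U}_m$ containing $V$ (fix a point of $V$ and use point-finiteness of $\mathcal{U}_m$), while for $m>n$ any $W\in\mathcal{U}_m$ with $V\subseteq W$ lies, by iterating (b), inside some $W_n\in\mathcal{U}_n$ with $V\subseteq W\subseteq W_n$; since $\mathcal{U}_n$ is an antichain this forces $V=W_n=W$. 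Hence $V$ has only finitely many supersets in $\mathcal{B}$. Being point-finite families of nonempty open sets in a separable space, the $\mathcal{U}_n$ are automatically countable, so $\mathcal{B}$ is countable.

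So the real work is to build the levels. Fix a countable basis $\{C_k:k<\omega\}$ of $X$, and start with $\mathcal{U}_0=\{X\}$. Given $\mathcal{U}_n$, one produces $\mathcal{U}_{n+1}$ by refining each $V\in\mathcal{U}_n$: cover $V$ by an antichain of open subsets of $V$ in a way that ``resolves $C_n$'' — every point that has $C_n$ as a neighbourhood acquires, at some level, a basic set squeezed between it and $C_n$, which is what makes $\mathcal{B}$ a basis — and then amalgamate and thin the pieces coming from the various $V\in\mathcal{U}_n$ to keep the whole level point-finite. The mechanism for producing antichain covers by small open sets is where the $T_1$ hypothesis is used: given an open set and a point in it, one deletes a suitable finite set of points (closed, since $X$ is $T_1$) to manufacture incomparable open neighbourhoods. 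One can equivalently recast the whole construction as the problem of producing a countable basis $\mathcal{B}_0$ such that every nonempty finite intersection of members of $\mathcal{B}_0$ has only finitely many supersets in $\mathcal{B}_0$, and then taking $\mathcal{B}$ to be the set of all nonempty finite intersections of members of $\mathcal{B}_0$; the reduction is routine and the combinatorial heart is the same.

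The main obstacle is keeping the levels point-finite. One cannot simply invoke metacompactness: a second countable $T_1$ space need not be metacompact, so not every open cover of $X$ has a point-finite open refinement. The point is that we are not refining a prescribed cover but building our own cofinal sequence of covers, so the finiteness can be arranged by a careful diagonalisation against the enumeration $\{C_k\}$, discarding ``excess'' members at each stage while retaining enough to keep $\mathcal{B}$ a basis. That the $T_1$ hypothesis is genuinely needed is witnessed by $\omega$ with the topology whose open sets are $\varnothing$, $\omega$, and the initial segments $\{0,1,\dots,n\}$: this is a second countable $T_0$ space in which every basis must contain each $\{0,\dots,n\}$, and each such set has infinitely many supersets, so no open-finite basis exists.
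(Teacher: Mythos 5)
Your reduction is sound as far as it goes: if $\mathcal{B}=\bigcup_{n<\omega}\mathcal{U}_n$ where each level is a point-finite antichain open cover and each level refines the previous one, then $\mathcal{B}$ is open-finite and countable, exactly as you argue. But that reduction is not the content of the proposition; the content is the construction of the levels, and there your write-up stops being a proof. The phrases ``amalgamate and thin the pieces \dots to keep the whole level point-finite'' and ``a careful diagonalisation \dots discarding excess members at each stage while retaining enough to keep $\mathcal{B}$ a basis'' name the two requirements that are in tension --- point-finiteness of each level versus $\mathcal{B}$ being a basis --- without exhibiting a construction that meets both. Concretely: (i) when you amalgamate the antichain covers produced inside the various $V\in\mathcal{U}_n$, the union is in general neither an antichain nor point-finite, since a point lies in several members of $\mathcal{U}_n$ and receives pieces from each, and a piece produced inside one $V$ may contain one produced inside another; (ii) ``thinning'' means deleting sets, and you must still guarantee that every neighbourhood pair $x\in C_k$ is resolved at some finite level, which is exactly the basis property the deletions threaten. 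Nothing in the sketch reconciles these, and it is not even clear that every second countable $T_1$ space admits a basis with your leveled structure: requiring the levels to be covers makes it an a priori stronger property than possessing an open-finite basis, so you may be trying to prove something harder than the proposition.

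The paper's proof avoids covers, antichains and point-finiteness altogether. It enumerates a countable basis $\seq{U_i}_{i<\omega}$ arranged so that each $U_i$ is a singleton or infinite (this is where $T_1$ enters), chooses distinct points $x_i\in U_i$ and sets $V_i=U_i\setminus\set{x_j : j<i}$, so that $V_i$ has at most $i+1$ supersets among the $V_j$; since the deletions may break the basis property at non-isolated points, it then adjoins correction sets $W_k$, each a finite intersection of basis sets around such a point minus a finite set, and checks directly that each $W_k$ also has finitely many supersets. If you want to salvage your route you must actually produce the point-finite antichain refining covers; as it stands the proposal is an unproved reformulation. (Your closing example showing that $T_1$ cannot be weakened to $T_0$ is correct, but it does not bear on the gap.)
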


\begin{proof}
  We may assume that the space $X$ is not discrete (in which case the
  result is trivial).  Let $\seq{U_i}_{i<\omega}$ enumerate a basis of
  $X$, without repetitions, such that for every $i <
  \omega$, either  $|U_i|\geq \omega$ or $|U_i| = 1$.
  
  Inductively choose $\seq{x_i,V_i}_{i<\omega}$ in such a way that if
  $U_i$ is a singleton then $V_i = U_i = \set{x_i}$, and if $|U_i|$ is
  infinite then $x_i \in V_i = U_i \setminus\set{x_j : j <
    i}$. Thus $i < j$ implies $x_i \in V_i \setminus V_j$ except when $V_j =
  \set{x_i}$. Therefore, each $V_i$ has at most $i+1$ supersets in
  $\mathcal{V} = \set{V_i: i <\omega}$.

  If all elements of the sequence $\seq{x_i}_{i<\omega}$ are isolated
  points of $X$, then $\mathcal{V}$ itself is the required basis
  for~$X$. Otherwise, let $\seq{w_k}_{k<\omega}$ enumerate the
  non-isolated points of $X$ that occur in the sequence
  $\seq{x_i}_{i<\omega}$, each with infinitely many repetitions.  Then
  define
  \begin{equation*}
    W_k = 
    \left(
      \bigcap \set{ U_i : i \leq k \land w_k \in
        U_i}
    \right)
    \setminus
    \set{x_i : i \leq k \land x_i \neq w_k}.
  \end{equation*}
  We claim that $\Basis = \mathcal{V} \cup \mathcal{W}$ is the
  required basis of~$X$.

  We first check that $\Basis$ is indeed a basis of~$X$. It is enough
  to verify that if $x \in U_i$ then there is a $B \in \Basis$ with $x
  \in B \subseteq U_i$. If $x$ is an isolated point of $X$ or $x$ does
  not occur in $\seq{x_i}_{i<\omega}$, then $x \in V_i \subseteq
  U_i$. Otherwise, $x \in W_k \subseteq U_i$ where $k \geq i$ is such
  that $x = w_k$.

  Next, we check that every $B \in \Basis$ has finitely many supersets
  in~$\Basis$.

  \emph{Case $B = V_i$.}  We have already verified that every $V_i$
  has finitely many supersets in $\mathcal{V}$. To see that $V_i$ has
  finitely many supersets in $\mathcal{W}$, note that if $k \geq i$,
  then either $w_k \neq x_i $ and $x_i \in V_i\setminus W_k$ (hence
  $V_i \nsubseteq W_k$), else $w_k = x_i$ and $W_k \subseteq V_i$
  (hence $V_i \subseteq W_k$ implies $V_i = W_k$). Therefore $V_i$ has
  at most $i+1$ supersets in~$\mathcal{W}$.

  \emph{Case $B = W_k$.}  Let $x_i = w_k$. If $j \geq
  \max(i,k)$ then either $w_j \neq w_k$ and $w_k \in W_k \setminus
  W_j$, or $w_j = w_k$ and $W_j \subseteq W_k$. Therefore there are at
  most $\max(i,k)+1$ elements of $\mathcal{W}$ that contain $W_k$.
  Also, if $j > i$ then $x_i \in W_k \setminus V_j$ so there are at
  most $i+1$ elements of $\mathcal{V}$ that contain~$W_k$. Therefore,
  $W_k$ has finitely many supersets in~$\mathcal{B}$.
\end{proof}

\section{Generalized Choquet games}\label{S:generalized}

Generalized Choquet games on a topological space $X$ are
played exactly like the usual Choquet game on $X$, so that a
play of the game determines a descending sequence
$\seq{U_i,V_i}_{i < \omega}$ of open sets.  The only
difference lies in the way the winner is determined.  We
will be interested in games where \POINT\ wins when the
sequence $\seq{U_i, V_i}_{i<\omega}$ falls into some fixed
payoff set of descending sequences of open sets.  Thus, for
example, the original Choquet game is defined with the
payoff set containing all plays $\seq{U_i, V_i}_{i <
  \omega}$ such that $\bigcap_i V_i$ is nonempty.  The
generalized Choquet game associated with payoff set $P$ is
denoted $\SCG_P(X)$. We will often think of the payoff set
as defining a property shared by the winning plays of the
game. 

Although many instances of generalized Choquet games can be
found in the literature, Galvin and
Telg{\'a}rsky~\cite{GalvinTelgarsky86} were the first to
explicitly consider this family of games. There is not much
that one can say about $\SCG_P(X)$ for arbitrary $P$, since
these are as general as Gale--Stewart games with arbitrary
payoff sets. Thus our results will focus on classes of
properties that are well-behaved.

\begin{definition}
Let $\seq{U_i}_{i<\omega}$ and $\seq{V_i}_{i<\omega}$ be descending
sequences of open sets of a space~$X$. We write $\seq{U_i}_{i<\omega} \leq
\seq{V_i}_{i<\omega}$ if for each $V_j$ there is some $U_i$
with $U_i \subseteq V_j$. We write $\seq{U_i}_{i<\omega} \equiv \seq{V_i}_{i<\omega}$ if
$\seq{U_i}_{i<\omega} \leq \seq{V_i}_{i<\omega}$ and
$\seq{V_i}_{i<\omega} \leq \seq{U_i}_{i<\omega}$.
\end{definition}

It is immediate that ${\leq}$ is a reflexive transitive
relation and that ${\equiv}$ is an equivalence relation.

\begin{definition}
Let $P$ be a set of descending sequences of open
sets of a space $X$. Then $P$ is: 
\begin{itemize}
\item \emph{monotone} if $\seq{V_i}_{i<\omega} \in P$ and
  $\seq{U_i}_{i<\omega} \leq \seq{V_i}_{i<\omega}$ implies
  $\seq{U_i}_{i<\omega} \in P$.
\item \emph{invariant} if $\seq{V_i}_{i<\omega} \in P$ and
  $\seq{U_i}_{i<\omega} \equiv \seq{V_i}_{i<\omega}$ implies
  $\seq{U_i}_{i<\omega} \in P$.
\end{itemize}
\end{definition}

Every monotone property is invariant, but not conversely. In
any play $\seq{x_i,U_i,V_i}_{i<\omega}$ of a generalized
Choquet game, we have $\seq{V_i}_{i<\omega} \equiv
\seq{U_i}_{i<\omega} \equiv \seq{U_i,V_i}_{i<\omega}$, so
for invariant properties it makes no difference which of
these three sequences is tested to determine the outcome of
the play.

Galvin and Telg{\'a}rsky considered monotone properties,
obtaining the following general result. 

\begin{theorem}[Galvin--Telg{\'a}rsky~\cite{GalvinTelgarsky86}]
  \label{T:galvin-telgarsky}
  Let $P$ be a monotone property of descending sequences of open
  subsets of $X$. If \POINT\ has a winning strategy in $\SCG_P(X)$
  then \POINT\ has a stationary winning strategy in~$\SCG_P(X)$.
\end{theorem}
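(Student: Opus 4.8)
The plan is to use monotonicity in two stages. First I would use only its weaker consequence that $P$ is \emph{invariant}, in order to show that the relevant state of a partial play is carried entirely by the last open set \POINT\ has played; then monotonicity proper is used to control the limiting behaviour. Fix a winning strategy $\Strategy$ for \POINT\ in $\SCG_P(X)$. For each nonempty open $W\subseteq X$, let $\SCG_P^W(X)$ be the ``residual'' game played just like $\SCG_P(X)$ except that \EMPTY's opening move $(x_1,U_1)$ must satisfy $U_1\subseteq W$, and \POINT\ wins a play $\seq{x_i,U_i,V_i}_{1\leq i<\omega}$ exactly when $\seq{W,U_1,V_1,U_2,V_2,\dots}\in P$; call $W$ \emph{winnable} if \POINT\ has a winning strategy in $\SCG_P^W(X)$. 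The observation that makes everything go is this: for any partial play $\seq{x_j,U_j,V_j}_{j\leq n}$ of $\SCG_P(X)$ and any legal continuation, the full sequence of open sets of the resulting play is $\equiv$-equivalent to the one-term sequence $\seq{V_n}$ concatenated with the sequence of open sets of the continuation; since an invariant $P$ cannot distinguish $\equiv$-equivalent sequences, the game \POINT\ faces after $\seq{x_j,U_j,V_j}_{j\leq n}$ is, as far as winning is concerned, exactly $\SCG_P^{V_n}(X)$. Applying this to plays following $\Strategy$ yields: (i) for every $x$ in a nonempty open $U$, the set $\Strategy(\seq{x,U})$ is winnable; and, applying the same remark inside a residual game, (ii) whenever $W$ is winnable and $x\in U\subseteq W$ with $U$ open, \POINT\ has a legal response $V$ with $x\in V\subseteq U$ that is again winnable.

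Using (i) I would fix, for each legal pair $(x,U)$, a winnable set $\Strategy^\ast(x,U)$ with $x\in\Strategy^\ast(x,U)\subseteq U$ (the exact recipe for this choice turns out to matter, and is addressed below). Since its value depends only on $(x,U)$, $\Strategy^\ast$ is stationary, and by (ii) every play $\seq{x_i,U_i,V_i}_{i<\omega}$ following $\Strategy^\ast$ has every $V_i$ winnable and leaves \POINT, at each finite stage, in the winnable game $\SCG_P^{V_n}(X)$. It remains to show such a play is actually a win, i.e.\ that $\seq{V_i}_{i<\omega}\in P$, and this is where monotonicity proper — not merely invariance — is indispensable. (It is genuinely needed: the payoff of the ordinary Choquet game is invariant but not monotone, and for it the strategy $V_i=U_i$, which keeps \POINT\ in a winnable residual game at every round, need not win.) Here the plan is to argue by contradiction. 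Assuming $\seq{V_i}_{i<\omega}\notin P$, I would run an auxiliary play of $\SCG_P^{V_0}(X)$ that follows a fixed winning strategy for that game and whose sequence of open sets $A$ satisfies $\seq{V_i}_{i<\omega}\leq A$; since $\seq{V_i}_{i<\omega}\notin P$, monotonicity (in its contrapositive form: if $B\leq A$ and $B\notin P$ then $A\notin P$) then forces $A\notin P$, contradicting that the auxiliary play follows a winning strategy.

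The construction of that auxiliary play is the main obstacle, precisely because monotonicity, which we need, also works against us here: a winning strategy for $\SCG_P^{V_0}(X)$ is free to answer \EMPTY's moves with very small open sets, so one cannot simply copy \EMPTY's real moves into the auxiliary game and hope the responses sit above the tail $\seq{V_i}_{i\geq n}$. The plan to get around this is to choose $\Strategy^\ast$'s winnable responses not arbitrarily but canonically: after fixing a well-ordering of the topology of $X$, let $\Strategy^\ast(x,U)$ be the least winnable open $V$ with $x\in V\subseteq U$ in that well-ordering, so that $\Strategy^\ast$'s move is forced to be at least as ``early'' as any winnable response the auxiliary strategy could legally give; one then builds the auxiliary play and the comparison $\seq{V_i}_{i<\omega}\leq A$ simultaneously, by recursion on the round, at each step feeding a suitable move into the auxiliary game and invoking self-similarity (property (ii)), the minimality of the $V_n$, and monotonicity to keep the comparison alive. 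Everything else — that the residual games and the relations $\leq$ and $\equiv$ behave as claimed, and that the auxiliary play stays legal — is routine unwinding of the definitions.
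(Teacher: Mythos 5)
The paper only quotes this theorem from Galvin--Telg{\'a}rsky and gives no proof of it, so there is nothing internal to compare against; judged on its own terms, your argument has a fatal gap, and the strategy you construct is not in general a winning one. The root problem is that your notion of ``winnable'' is vacuous: whenever \POINT\ has a winning strategy $\Strategy$ in $\SCG_P(X)$ and $P$ is invariant, \emph{every} nonempty open $W$ is winnable, because \POINT\ can answer \EMPTY's opening move $\seq{x_1,U_1}$ (with $U_1\subseteq W$) by restarting $\Strategy$ from scratch at $\seq{x_1,U_1}$; the resulting open-set sequence is $\equiv$-equivalent to $\seq{W,U_1,V_1,\dots}$, so invariance gives the win. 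Hence your (i) and (ii) are trivially true, and ``the $\preceq$-least winnable open $V$ with $x\in V\subseteq U$'' is just the $\preceq$-least open neighborhood of $x$ inside $U$ for an arbitrary well-ordering of the topology; it retains no information whatsoever from $\Strategy$. This can lose. Take $X=\omega^\omega$, let $P$ be the (monotone) convergence property from Proposition~\ref{P:convergent-stationary}, let $U_i$ be the set of sequences that either begin with $i+1$ zeros or begin with a one, and choose the well-ordering of the topology to begin $U_0\prec U_1\prec U_2\prec\cdots$. Since no $U_j$ with $j<i$ is a subset of $U_i$, your stationary strategy answers $\seq{x,U_i}$ with $U_i$ itself; so \EMPTY\ wins by playing $x=111\cdots$ down this chain, as no $U_i$ is contained in the neighborhood $\set{f : f(0)=f(1)=1}$ of $x\in\bigcap_i U_i$, even though \POINT\ does have a (stationary) winning strategy for this monotone $P$ on~$\omega^\omega$.

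The deferred construction of the auxiliary play is therefore not a routine detail to be ``kept alive'' by recursion --- it is the entire content of the theorem, and the tools you name cannot produce it. Minimality in the well-ordering yields only $V_{m+1}\preceq A$, never the inclusion $V_j\subseteq A$ that the comparison $\seq{V_i}_{i<\omega}\leq A$ demands; and since a winning strategy's responses are subsets of the moves fed to it, there is no way to steer the auxiliary game so that its responses contain a tail of an already-determined real play. What is actually required is a stationary response to $\seq{x,U}$ that is simultaneously compatible with the responses of $\Strategy$ over \emph{all} histories ending in $\seq{x,U}$ --- essentially the ``enough good triples'' condition of Proposition~\ref{P:good-triples}, which this paper can only secure under the open-finiteness hypothesis (Theorem~\ref{T:open-finite-stationary}). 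Galvin and Telg{\'a}rsky obtain such a response for monotone $P$ by an amalgamation that uses monotonicity inside the construction of the stationary strategy, not merely in a final verification; your write-up correctly observes that invariance alone cannot suffice, but it never supplies that mechanism.
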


\noindent
Unfortunately, the methods of Galvin and Telg{\'a}rsky rely
heavily on monotonicity. A key example of an invariant
property that is not monotone is the property
``$\bigcap_i V_i$ is not empty'' that
defines the original Choquet game. In particular,
Theorem~\ref{T:galvin-telgarsky} cannot be applied to
Choquet games in the original sense.

In the following sections, we show that if $X$ has an open-finite basis
and $P$ is an invariant property such that \POINT\ has a winning
strategy in $\SCG_P(X)$ then \POINT\ has a stationary winning strategy
in $\SCG_P(X)$. We also show that, for an invariant property $P$ on
any space $X$, if \POINT\ has a winning strategy in $\SCG_P(X)$ then
\POINT\ has a winning strategy in $\SCG_P(X)$ that only needs to
remember the last point played by \EMPTY\ and all of the previously
played open sets, and thus does not need to know the 
other points played by~\EMPTY.

\subsection{Basic properties}

The invariant properties of descending sequences of open subsets of a
space $X$ form a complete Boolean algebra of sets, as they are closed
under arbitrary unions, arbitrary intersections, and complements.  The
monotone properties are similarly closed under arbitrary unions and
intersections, but not under complements; so the monotone properties
form a complete lattice of sets.

In this section, we study the subsets of these algebras consisting of
the invariant properties for which \POINT\ has a winning strategy and
the invariant properties for which \POINT\ has a stationary winning
strategy.  The next proposition shows that the properties $P$ for
which \POINT\ has a winning strategy in $\SCG_P(X)$ are closed under
countable intersections in the algebra of invariant properties.

\begin{proposition}\label{P:gdelta}
  Let $\seq{P(k)}_{k < \omega}$ be a sequence of invariant
  properties on a space~$X$ and let $P = \bigcap_{k<\omega}
  P(k)$. If \POINT\ has a winning strategy in $\SCG_{P(k)}(X)$
  for each $k < \omega$, then \POINT\ has a winning strategy
  in~$\SCG_{P}(X)$.
\end{proposition}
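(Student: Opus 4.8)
The plan is to interleave the individual winning strategies $\Strategy_k$ for $\SCG_{P(k)}(X)$ into a single strategy $\Strategy$ for $\SCG_P(X)$ by a standard diagonalization. First I would observe that, since the game $\SCG_P(X)$ has exactly the same legal moves as each $\SCG_{P(k)}(X)$ — only the payoff set differs — a partial play against $\Strategy$ can simultaneously be viewed as a partial play against each $\Strategy_k$, provided we bookkeep which rounds are ``assigned'' to which $k$. The simplest device is a bijection $\pi\colon\omega\to\omega\times\omega$, say with $\pi(0)=(0,0)$, used to schedule the strategies: at round $n$, after \EMPTY\ plays $(x_n,U_n)$, I run all the strategies $\Strategy_k$ for the $k$ that have been ``activated'' so far, feeding each one the subsequence of moves assigned to it, and I let \POINT's move $V_n$ be the intersection of the finitely many open sets these strategies return (intersected with $U_n$ to keep it legal). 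Concretely: maintain for each $k$ a record of the moves $\seq{x,U}$ that have been passed to $\Strategy_k$, initially empty; at round $n$ with $\pi(n)=(k_n,\cdot)$, activate $\Strategy_{k_n}$ if not already active, then for every currently active $k$ append $(x_n, U_n)$ to $k$'s record, obtain $V^{(k)}_n=\Strategy_k(\text{record of }k)$, and set $V_n = U_n\cap\bigcap\{V^{(k)}_n : k\text{ active at round }n\}$.

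The key points to verify are: (1) every move $V_n$ is a legal move for \POINT, i.e. $x_n\in V_n\subseteq U_n$; this holds because each $V^{(k)}_n$ contains $x_n$ (each $\Strategy_k$ received $(x_n,U_n)$ as its latest \EMPTY-move, since I append it to every active record before querying) and is contained in the $U$ most recently passed to $\Strategy_k$, which — because the whole sequence $\seq{U_i}$ is descending — contains $U_n$; and intersecting with $U_n$ makes the containment $V_n\subseteq U_n$ immediate. (2) For each fixed $k$, the sequence of moves $\seq{x_i,U_i,V^{(k)}_i}$ that $\Strategy_k$ ``sees'' (reindexed along the rounds assigned to $k$, which form an infinite set since $\pi$ is a bijection) is a legal play of $\SCG_{P(k)}(X)$ consistent with $\Strategy_k$; here one must check that the open sets actually passed to $\Strategy_k$ still form a descending sequence — they do, as they are a subsequence of $\seq{U_i}$ — and that \POINT's responses in that subplay are exactly what $\Strategy_k$ dictated, which is true by construction. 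Since $\Strategy_k$ is winning, the sequence of open sets seen by $\Strategy_k$ lies in $P(k)$.

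It remains to conclude that the full play $\seq{U_i,V_i}_{i<\omega}$ following $\Strategy$ lies in $P=\bigcap_k P(k)$. Fix $k$. Let $\seq{U'_j,V'_j}_{j<\omega}$ be the subplay seen by $\Strategy_k$; by step (2), $\seq{V'_j}_{j<\omega}\in P(k)$. I claim $\seq{V_i}_{i<\omega}\equiv\seq{V'_j}_{j<\omega}$. For one direction, each $V'_j$ equals some $V^{(k)}_{i}$ with $i$ a round assigned to $k$, and at that round $V_{i}\subseteq V^{(k)}_{i}=V'_j$, so $\seq{V_i}_{i<\omega}\leq\seq{V'_j}_{j<\omega}$. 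For the other direction, given any $V_i$, it suffices to find some $V'_j\subseteq V_i$; taking the least round $i'\geq i$ assigned to $k$ that is also $\geq$ the round at which $k$ was activated, the set $V'_{j}$ corresponding to round $i'$ satisfies $V'_{j}=V^{(k)}_{i'}\supseteq V_{i'}$, but $V_{i'}\subseteq V_i$ since $i'\geq i$ and the $V$'s are descending — hmm, that gives the wrong inclusion, so instead I use that $V^{(k)}_{i'}\subseteq U'$ for the previous $U$ in $k$'s record, which is some $U_{m}$ with $m\leq i$; and $U_{m}\supseteq U_{i}\supseteq$... — the clean fix is to note $V^{(k)}_{i'}$ is contained in the most recent $U$ passed to $\Strategy_k$ before round $i'$, and by choosing $i'$ large enough (the first $k$-round after round $i$) that $U$ equals $U_{i}\cap(\text{earlier }V\text{'s})\subseteq V_i$ is false in general — so the genuinely correct argument is simply: $V'_{j}\subseteq U_{i'}$ where $U_{i'}$ is a later \EMPTY-move, hence $\subseteq U_{i+1}\subseteq V_{i}$? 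No: $U_{i+1}\subseteq V_i$ does hold by the rules of the game, and $i'\geq i$ gives $U_{i'}\subseteq U_{i+1}$ when $i'\geq i+1$; choosing $i'$ to be any $k$-round $\geq i+1$ (such exists) yields $V'_{j}\subseteq U_{i'}\subseteq U_{i+1}\subseteq V_i$, so $\seq{V'_j}_{j<\omega}\leq\seq{V_i}_{i<\omega}$. Thus $\seq{V_i}_{i<\omega}\equiv\seq{V'_j}_{j<\omega}$, and by invariance of $P(k)$, $\seq{V_i}_{i<\omega}\in P(k)$. As $k$ was arbitrary, $\seq{V_i}_{i<\omega}\in P$, so $\Strategy$ is a winning strategy in $\SCG_P(X)$. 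The main obstacle, as the flailing above indicates, is getting the cofinality bookkeeping exactly right so that the subplay seen by each $\Strategy_k$ is genuinely $\equiv$-equivalent to the full play; the cleanest route is to set up the scheduling $\pi$ so that each $k$ is activated at its first assigned round and to quote invariance of $P(k)$ together with the inclusion $U_{i+1}\subseteq V_i$ built into the Choquet rules, as just done.
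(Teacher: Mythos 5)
Your proposal is correct and is essentially the paper's argument: dovetail the strategies $\Strategy_k$ via a fixed pairing of $\omega$ with $\omega\times\omega$, note that the cofinal subplay seen by each $\Strategy_k$ has open sets $\equiv$-equivalent to those of the full play (using $U_{i+1}\subseteq V_i$ from the rules of the game), and invoke invariance of $P(k)$. The only difference is bookkeeping: the paper has \POINT\ play, on round $n=\langle k,m\rangle$, exactly the response of $\Strategy_k$ fed only the rounds indexed by $k$, rather than intersecting the responses of all strategies activated so far --- which spares you the (true, but somewhat muddled and internally inconsistent in your write-up) verifications that the intersected move is legal and that each $\Strategy_k$'s view, with \POINT's responses taken to be $V^{(k)}_i$ rather than $V_i$, remains a legal play against $\Strategy_k$ because $U_{i+1}\subseteq V_i\subseteq V^{(k)}_i$.
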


\begin{proof}
  For each $k < \omega$, let $\Strategy_k$ be a winning
  strategy for \POINT\ in $\SCG_{P_k}(X)$.  Let $\langle
  \cdot,\cdot\rangle$ be a fixed bijection from $\omega
  \times \omega$ to $\omega$. We define the strategy
  $\Strategy$ by induction as follows. Given a partial play
  $\seq{x_i,U_i,V_i}_{i \leq n}$ against $\Strategy$, write $n
  = \langle{k, m}\rangle$ and then define
  \begin{equation*}
    \Strategy\seq{x_i,U_i} 
    = \Strategy_k\seq{x_{\langle{k,j}\rangle},U_{\langle{k,j}\rangle}}_{j \leq m}.
  \end{equation*}
  For any play $\seq{x_i,U_i,V_i}_{i<\omega}$ against $\Strategy$,
  the sequence $\seq{x_{\langle{k,j}\rangle},U_{\langle
      k,j\rangle},V_{\langle{k,j}\rangle}}_{j<\omega}$ is a play
  against $\Strategy_k$. Since $\seq{V_i}_{i<\omega} \equiv
  \seq{V_{\langle{k,j}\rangle}}_{j<\omega}$ we see that
  $\seq{V_i}_{i<\omega} \in P(k)$. Since this is true for every $k <
  \omega$, we conclude that $\seq{V_i}_{i<\omega} \in P$. Therefore,
  $\Strategy$ is a winning strategy for \POINT\ in $\SCG_P(X)$.
\end{proof}

\noindent
It follows from Theorem~\ref{T:galvin-telgarsky} that the monotone
properties $P$ for which \POINT\ has a stationary winning strategy in
$\SCG_P(X)$ form a filter which is closed under countable
intersections in the lattice of monotone properties. The set of
invariant properties $P$ for which \POINT\ has a stationary winning
strategy in $\SCG_P(X)$ forms a filter in the algebra of invariant properties.

\begin{proposition}
  Let $P_1$ and $P_2$ be invariant properties. If \POINT\ has winning
  stationary strategies in $\SCG_{P_2}(X)$ and $\SCG_{P_2}(X)$, then
  \POINT\ has a stationary winning strategy in $\SCG_{P_1 \cap
    P_2}(X)$.
\end{proposition}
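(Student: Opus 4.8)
The plan is to build the desired stationary strategy $\Strategy$ for \POINT\ in $\SCG_{P_1 \cap P_2}(X)$ by intersecting the two given strategies move by move. Fix winning stationary strategies $\Strategy_1$ and $\Strategy_2$ for \POINT\ in $\SCG_{P_1}(X)$ and $\SCG_{P_2}(X)$. For a single move $\seq{x,U}$ of \EMPTY\ I would define
\[
  \Strategy\seq{x,U} = \Strategy_1\seq{x,U} \cap \Strategy_2\seq{x,U}.
\]
Since $x \in \Strategy_k\seq{x,U} \subseteq U$ for $k=1,2$, the set $\Strategy\seq{x,U}$ is open and satisfies $x \in \Strategy\seq{x,U} \subseteq U$, so this is a legal response for \POINT; and $\Strategy$ is stationary by construction. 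So the content is entirely in showing that $\Strategy$ is winning.

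To that end I would fix a play $\seq{x_i,U_i,V_i}_{i<\omega}$ against $\Strategy$, so that $V_i = \Strategy_1\seq{x_i,U_i} \cap \Strategy_2\seq{x_i,U_i}$ and $U_{i+1}\subseteq V_i$ for every $i$, and then set $V_i' = \Strategy_1\seq{x_i,U_i}$. The key step is the observation that $\seq{x_i,U_i,V_i'}_{i<\omega}$ is itself a legal play against $\Strategy_1$: because $\Strategy_1$ is stationary its response to the move $\seq{x_i,U_i}$ is $V_i'$ no matter what the history is, and \EMPTY's moves remain legal in this simulated play since $U_{i+1}\subseteq V_i \subseteq V_i'$. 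As $\Strategy_1$ is winning, $\seq{V_i'}_{i<\omega}\in P_1$; symmetrically, simulating against $\Strategy_2$ gives $\seq{x_i,U_i,\Strategy_2\seq{x_i,U_i}}_{i<\omega}\in P_2$.

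It then remains to transfer membership in $P_1$ (and $P_2$) from the simulated $V$-sequences back to the actual sequence $\seq{V_i}_{i<\omega}$. On one hand $V_i \subseteq V_i'$ for each $i$, so $\seq{V_i}_{i<\omega} \leq \seq{V_i'}_{i<\omega}$. On the other hand $V_{i+1}' = \Strategy_1\seq{x_{i+1},U_{i+1}} \subseteq U_{i+1} \subseteq V_i$, so $\seq{V_i'}_{i<\omega} \leq \seq{V_i}_{i<\omega}$; hence $\seq{V_i}_{i<\omega} \equiv \seq{V_i'}_{i<\omega}$. Since $P_1$ is invariant, $\seq{V_i}_{i<\omega}\in P_1$, and the same argument with $\Strategy_2$ gives $\seq{V_i}_{i<\omega}\in P_2$, so $\seq{V_i}_{i<\omega}\in P_1\cap P_2$ and $\Strategy$ wins.

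The main point requiring care is precisely this last transfer: because $P_1$ and $P_2$ are only assumed \emph{invariant} rather than monotone, the inclusion $V_i\subseteq V_i'$ by itself is not enough, and one genuinely needs the reverse cofinality $V_{i+1}'\subseteq V_i$ to conclude $\seq{V_i}_{i<\omega}\equiv\seq{V_i'}_{i<\omega}$. Fortunately that cofinality is handed to us by the rules of the game — every move of \EMPTY\ lies inside the previous move of \POINT\ — together with the trivial fact that $\Strategy_1$'s response always shrinks \EMPTY's set; so in the end there is no real obstacle, and the remaining verifications (legality of moves, that $\Strategy$ is well defined and stationary) are routine.
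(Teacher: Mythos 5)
Your proof is correct. The only difference from the paper is the construction of the combined strategy: you intersect, setting $\Strategy(x,U) = \Strategy_1(x,U) \cap \Strategy_2(x,U)$, whereas the paper composes, setting $\Strategy(x,U) = \Strategy_2(x,\Strategy_1(x,U))$. The verification is the same in spirit — simulate auxiliary plays against each $\Strategy_k$, observe that \EMPTY's moves remain legal there because the actual $V_i$ is contained in the simulated response while the next simulated response is contained in $V_i$, and then transfer membership in $P_k$ back along the resulting $\equiv$ via invariance. The small trade-off is that with the composition the actual play is literally a play against $\Strategy_2$, so $\seq{V_i}_{i<\omega} \in P_2$ comes for free and invariance is needed only for $P_1$; your symmetric version invokes invariance for both properties, which is harmless under the stated hypotheses. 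You are also right to flag that monotonicity would not suffice here in general and that the reverse cofinality $V'_{i+1} \subseteq U_{i+1} \subseteq V_i$ is the step doing the work.
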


\begin{proof}
  Given winning stationary strategies $\Strategy_1$ and
  $\Strategy_2$ for \POINT\ in $\SCG_{P_1}(X)$ and
  $\SCG_{P_2}(X)$, respectively, define 
  $\Strategy(x,U) = \Strategy_2(x,\Strategy_1(x,U))$.  If
  $\seq{x_i,U_i,V_i}_{i<\omega}$ is a play of \EMPTY\ against
  $\Strategy$ and $W_i = \Strategy_1(x_i,U_i)$ for each $i <
  \omega$, then $\seq{x_i,W_i,V_i}_{i<\omega}$ is a play
  against $\Strategy_2$ and $\seq{x_i,U_i,W_i}_{i<\omega}$
  is a play against $\Strategy_1$. Thus
  $\seq{V_i}_{i<\omega} \in P_2$ and $\seq{W_i}_{i<\omega} \in P_1$,
  but since $\seq{V_i}_{i<\omega} \equiv
  \seq{W_i}_{i<\omega}$ it follows that
  $\seq{V_i}_{i<\omega} \in P_1$ as well.
\end{proof}

 When $X$ has an open-finite basis,
Theorem~\ref{T:open-finite-stationary} and
Proposition~\ref{P:gdelta} can be employed to show that the
filter of invariant properties for which \POINT\ has a
stationary winning strategy is closed under countable intersections.  We do not
have a full characterization of the spaces with this
property.

\begin{question}
  For what spaces $X$ is the filter of invariant properties $P$ for
  which \POINT\ has a stationary winning strategy in
  $\SCG_P(X)$ closed under countable intersections?
\end{question}

From time to time, we will find it useful to restrict the
moves of the players to some fixed basis $\Basis$ for the
space $X$. For any property $P$ of descending sequences of
open sets, we let $\SCG_P(X,\Basis)$ refer to the variant of
the generalized Choquet game on $X$ in which both players
are constrained to play open sets from the basis~$\Basis$.
Strategies for this game are similarly restricted. Such
restrictions have no impact on the determinacy of games in
which the defining property is invariant. 

\begin{proposition}\label{P:changeofbasis}
  Let $\Basis$ be a basis for the space $X$ and let $B$ be a function
  such that, for every neighborhood pair $x \in U$, we have $x \in
  B(x,U) \subseteq U$ and $B(x,U) \in \Basis$. If $P$ is an invariant
  property, then:
  \begin{enumerate}[\upshape(i)]
  \item\label{L:changeofbasis:b2g}%
    If $\Strategy$ is a winning strategy for \POINT\ in
    $\SCG_P(X,\Basis)$ then
    \begin{equation*}
      \Strategy'(\seq{x_i,U_i}_{i \leq n}) 
= \Strategy(\seq{x_i,B(x_i,U_i)}_{i \leq n})
    \end{equation*}
    defines a winning strategy for \POINT\ in~$\SCG_P(X)$.
  \item \label{L:changeofbasis:g2b}%
    If $\Strategy$ is a winning strategy for \POINT\ in $\SCG_P(X)$
    then
    \begin{equation*}
      \Strategy''(\seq{x_i,U_i}_{i \leq n}) =
      B(x_n,\Strategy(\seq{x_i,U_i}_{i \leq n}))
    \end{equation*}
    defines a winning strategy for \POINT\ in~$\SCG_P(X,\Basis)$.
  \end{enumerate}
  In each case, if the original strategy was stationary
  then so is the modified strategy.
\end{proposition}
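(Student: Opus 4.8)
The plan is to verify each of the two implications directly, since each amounts to checking that a play against the modified strategy projects to (or lifts from) a play against the original strategy, together with an application of invariance of $P$.

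For part~(\ref{L:changeofbasis:b2g}), suppose $\seq{x_i,U_i,V_i}_{i<\omega}$ is a play of \EMPTY\ against $\Strategy'$ in $\SCG_P(X)$. First I would observe that each $U_i$ is a legal move in $\SCG_P(X)$, so in particular $x_i \in U_i$ and $U_i \subseteq V_{i-1}$ for $i \geq 1$; since $V_{i-1} = \Strategy'(\seq{x_j,U_j}_{j \leq i-1}) = \Strategy(\seq{x_j,B(x_j,U_j)}_{j \leq i-1}) \in \Basis$ and $x_i \in B(x_i,U_i) \subseteq U_i \subseteq V_{i-1}$, the sequence $\seq{x_i,B(x_i,U_i),V_i}_{i<\omega}$ is a legal play of \EMPTY\ against $\Strategy$ in the restricted game $\SCG_P(X,\Basis)$, with the same responses $V_i$. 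Because $\Strategy$ is winning there, $\seq{V_i}_{i<\omega} \in P$; hence $\Strategy'$ is winning in $\SCG_P(X)$. (No appeal to invariance is actually needed for this direction, since the $V_i$ are literally unchanged; but the $U_i$ have been replaced, which is harmless because the outcome depends only on the $V_i$.)

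For part~(\ref{L:changeofbasis:g2b}), suppose $\seq{x_i,U_i,V_i}_{i<\omega}$ is a play of \EMPTY\ against $\Strategy''$ in $\SCG_P(X,\Basis)$, so each $U_i,V_i \in \Basis$ and $V_i = B(x_i,\Strategy(\seq{x_j,U_j}_{j \leq i}))$. Set $W_i = \Strategy(\seq{x_j,U_j}_{j \leq i})$; then $x_i \in V_i \subseteq W_i \subseteq U_i$, and in particular $U_{i+1} \subseteq V_i \subseteq W_i$, so $\seq{x_i,U_i,W_i}_{i<\omega}$ is a legal play of \EMPTY\ against $\Strategy$ in $\SCG_P(X)$. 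Thus $\seq{W_i}_{i<\omega} \in P$. Now, from $V_i \subseteq W_i \subseteq U_{i-1} \subseteq V_{i-1}$ one checks $\seq{V_i}_{i<\omega} \leq \seq{W_i}_{i<\omega}$ and $\seq{W_i}_{i<\omega} \leq \seq{V_i}_{i<\omega}$ (for each $W_j$, the set $V_{j+1} \subseteq W_{j+1} \subseteq U_{j+1} \subseteq V_j \subseteq W_j$ wait—more simply, $V_{j} \subseteq W_j$ witnesses one direction and $W_{j+1} \subseteq U_{j+1} \subseteq V_j$ witnesses the other), so $\seq{V_i}_{i<\omega} \equiv \seq{W_i}_{i<\omega}$. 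By invariance of $P$, $\seq{V_i}_{i<\omega} \in P$, so $\Strategy''$ is winning.

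Finally, for the stationarity clause: in each formula, $\Strategy'$ (resp.\ $\Strategy''$) is obtained from $\Strategy$ by composing with the function $B$ applied coordinatewise to the \EMPTY-moves (resp.\ to the output), and these operations depend only on the last pair $(x_n,U_n)$ together with whatever $\Strategy$ itself uses; so if $\Strategy(\seq{x_j,U_j}_{j \leq n})$ depends only on $(x_n,U_n)$, the same is visibly true of $\Strategy'$ and $\Strategy''$. I expect the only point requiring care is the $\equiv$-equivalence in part~(\ref{L:changeofbasis:g2b}) --- making sure the interleaving inequalities $V_i \subseteq W_i \subseteq U_i$ and $U_{i+1} \subseteq V_i$ are combined correctly to get cofinality in both directions --- but this is a short chase rather than a genuine obstacle.
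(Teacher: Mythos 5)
Your proposal is correct and follows essentially the same route as the paper: in each direction you exhibit the corresponding play against the original strategy (replacing $U_i$ by $B(x_i,U_i)$ in part~(i), and comparing $V_i$ with $W_i=\Strategy(\seq{x_j,U_j}_{j\le i})$ in part~(ii)), and in part~(ii) you invoke invariance via exactly the interleaving $W_{i+1}\subseteq U_{i+1}\subseteq V_i\subseteq W_i$ that the paper uses. Nothing is missing.
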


\begin{proof}
  \emph{Ad~\eqref{L:changeofbasis:b2g}.}  Fix a play
  $\seq{x_n,U_n,V_n}_{n<\omega}$ of $\SCG_P(X)$ in which \POINT\ follows
  $\Strategy'$. By definition of $\Strategy'$, we always have $V_n =
  \Strategy\seq{x_i,\widehat{U}_i}_{i\leq n}$ where $\widehat{U}_i =
  B(x_i,U_i)$. Thus $\seq{x_n,\widehat{U}_n,V_n}$ is a play of
  $\SCG_P(X,\Basis)$ wherein \POINT\ used $\Strategy$. Since
  $\Strategy$ is winning for \POINT\, we have that
  $\seq{V_n}_{n<\omega} \in P$.
  
  \emph{Ad~\eqref{L:changeofbasis:g2b}.}  Fix a play
  $\seq{x_n,U_n,V_n}_{n<\omega}$ of $\SCG_P(X,\Basis)$ in
  which \POINT\ used $\Strategy''$. By definition of
  $\Strategy''$, we always have $V_n = B(x_n,\widehat{V}_n)$
  where $\widehat{V}_n = \Strategy\seq{x_i,U_i}_{i \leq n}$.
  Thus $\seq{x_n,U_n,\widehat{V}_n}_{n<\omega}$ is a play of
  $\SCG_P(X)$ wherein \POINT\ used $\Strategy$. Since
  $\Strategy$ is winning for \POINT, we have that
  $\seq{\widehat{V}_n}_{n<\omega} \in P$. Moreover, since
  \begin{equation*}
    \widehat{V}_{n+1} \subseteq U_{n+1} \subseteq V_n \subseteq \widehat{V}_n,
  \end{equation*}
  we have $\seq{V_n}_{n<\omega} \equiv
  \seq{\widehat{V}_n}_{n<\omega}$, which means that
  $\seq{V_n}_{n<\omega} \in P$, by the invariance of~$P$.
\end{proof}

\noindent
A similar result holds for winning strategies for~\EMPTY, but we
will have no use for that result, because we only study
winning strategies for~\POINT.

\subsection{Trace strategies}
Trace strategies allow \POINT\ to ignore all points played
by \EMPTY\ except the most recent point, and thus only
consider the sequence of open sets that have been played
before the latest move. This is a much weaker restriction on
a strategy than stationarity, allowing trace strategies to 
be obtained in more general circumstances.  We will show
that a winning trace strategy can always be found when
\POINT\ has a winning strategy in $\SCG_P(X)$, without
further assumptions on~$X$. In the
Section~\ref{S:stationary}, trace strategies serve as an
intermediate step in the path towards stationary strategies.
In the Section~\ref{S:convergent}, they are used to improve
cardinality results in circumstances when we cannot obtain
stationary strategies.

\begin{definition}
  The \emph{open trace} of a (possibly partial) play
  $\seq{x_i,U_i,V_i}_{i<n}$ of $\SCG_P(X)$ is the set
  $\set{\seq{U_i,V_i}:i<n}$ of pairs of open sets that have
  been played by the two players.
\end{definition}

\begin{definition}
A \emph{trace strategy} for \POINT\ in $\SCG_P(X)$ is a
strategy $\Strategy$ where each move for \POINT\ depends only on
\EMPTY's last move and the open trace of the previous moves.  In other
words, there is a function $\Strategy^*$ such that
\begin{equation*}
  \Strategy(\seq{x_i,U_i}_{i \leq n}) 
  = \Strategy^*(x_n,U_n,\set{\seq{U_i,V_i}:i < n}),
\end{equation*}
where as usual $V_i = \Strategy(\seq{x_j,U_j}_{j \leq i})$ for~$i <
n$.
\end{definition}

Before proving the existence of trace strategies, we need to
eliminate some strange behavior that is admissible in
general strategies for Gale--Stewart games but serves no
purpose in generalized Choquet games with invariant payoff
sets. One reason behind the definition of an open trace as
the set of previous moves, rather than the sequence of
previous moves, is to reduce difficulties caused by
possibility that the players play the same move
repeatedly.  However, additional work is required to
completely remove the effects of repetition from arbitrary
strategies.

\begin{definition}
A strategy $\Strategy$ for \POINT\ in a generalized Choquet
game is \emph{stable} if $\Strategy(x_0,U_0;\dots;x_n,U_n) = U_n$
implies that
\[
\begin{split}
  \Strategy(x_0,U_0;&\dots;x_n,U_n;x_n,U_n;\dots;x_m,U_m) \\
&  = \Strategy(x_0,U_0;\dots;x_n,U_n;\dots;x_m,U_m).
\end{split}
\]
In other words, \POINT's responses are unaffected if \EMPTY\
repeats the same move two (or more) times in a play.
\end{definition}

Every trace strategy is stable; every winning strategy for
an invariant property can be made into a stable strategy.

\begin{proposition}
  \label{P:stable-strategy}
  Let $P$ be an invariant property and let $\Basis$ be an arbitrary
  basis for $X$.  If \POINT\ has a winning strategy in
  $\SCG_P(X)$ then \POINT\ has a stable winning strategy in
  $\SCG_P(X,\Basis)$.
\end{proposition}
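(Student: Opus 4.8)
The plan is to transform a given winning strategy in two stages: first make \POINT\ shrink whenever possible, then make \POINT\ ignore immediate repetitions by \EMPTY.

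By Proposition~\ref{P:changeofbasis} (applied with any function $B$ selecting basic neighborhoods) it is enough to start from a winning strategy $\Strategy$ for \POINT\ in $\SCG_P(X,\Basis)$. I would first pass to a strategy $\Strategy_1$ which runs $\Strategy$ against the literal sequence of \EMPTY's moves as a shadow play, but shrinks more aggressively: when $\Strategy$ answers \EMPTY's move $x_n\in U_n$ with a proper subset of $U_n$, $\Strategy_1$ copies that answer; when $\Strategy$ answers with $U_n$ itself, $\Strategy_1$ answers instead with a fixed basic set $W$ such that $x_n\in W\subsetneq U_n$ if one exists, and with $U_n$ otherwise. Since \EMPTY's next move always lies inside $\Strategy_1$'s answer, hence inside $\Strategy$'s answer, the shadow remains a legal play against $\Strategy$; and since each answer $V_i$ of $\Strategy_1$ is contained in the corresponding answer $\hV_i$ of $\Strategy$, a play against $\Strategy_1$ satisfies $V_{i+1}\subseteq\hV_{i+1}\subseteq U_{i+1}\subseteq V_i\subseteq\hV_i$, so $\seq{V_i}\equiv\seq{\hV_i}$ and $\Strategy_1$ is winning by invariance of $P$. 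The gain from this step is that $\Strategy_1$ answers with $U_n$ \emph{only when $U_n$ has no proper basic subset containing $x_n$}; call such a $U_n$ minimal for $x_n$.

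I would then define $\Strategy_2$ by de-duplication: for a partial play $\seq{x_i,U_i}_{i\le n}$ let $\sigma\seq{x_i,U_i}_{i\le n}$ be the subsequence obtained by deleting each move equal to the move immediately before it, and put $\Strategy_2\seq{x_i,U_i}_{i\le n}=\Strategy_1\bigl(\sigma\seq{x_i,U_i}_{i\le n}\bigr)$. An easy induction shows that $\sigma$ carries legal plays against $\Strategy_2$ to legal plays against $\Strategy_1$, that $\Strategy_2$'s answer is the last answer of $\Strategy_1$ along that shadow, and hence that $\Strategy_2$'s moves are legal and lie in $\Basis$. The elementary observation making this work is that an \emph{immediate} literal repetition by \EMPTY\ is legal only when \POINT\ has just passed (answered $U_n$), so deleting immediate repetitions handles exactly the repetitions addressed by stability; and since appending another copy of $(x_n,U_n)$ to a play ending in $(x_n,U_n)$ changes neither $\sigma$ of that play nor $\sigma$ of any extension, the displayed formula for $\Strategy_2$ yields stability directly. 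To see $\Strategy_2$ is winning, note that along any play $\seq{x_i,U_i,V_i}_{i<\omega}$ against $\Strategy_2$ the shadows $\sigma\seq{x_j,U_j}_{j\le n}$ increase to a play $\seq{\tilde x_k,\tilde U_k,\tilde V_k}_{k<\mu}$ against $\Strategy_1$, with $\{V_i:i<\omega\}=\{\tilde V_k:k<\mu\}$ and both sequences decreasing. If $\mu=\omega$ this is a full play against the winning strategy $\Strategy_1$, so $\seq{\tilde V_k}\in P$ and thus $\seq{V_i}\equiv\seq{\tilde V_k}\in P$. If $\mu<\omega$ then from some round on \EMPTY\ only repeats its last distinct move $(\tilde x_{\mu-1},\tilde U_{\mu-1})$ and $\Strategy_2$ passes on it forever; by the previous step $\tilde U_{\mu-1}$ is minimal for $\tilde x_{\mu-1}$, so letting \EMPTY\ repeat $(\tilde x_{\mu-1},\tilde U_{\mu-1})$ forever is a legal continuation of the shadow against $\Strategy_1$ on which $\Strategy_1$ is forced to answer $\tilde U_{\mu-1}$ each time; the resulting play is won by $\Strategy_1$, and its sequence of open sets is $\equiv$ to $\seq{V_i}$, so $\seq{V_i}\in P$.

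The step I expect to be the real obstacle is exactly this case $\mu<\omega$: because a stable strategy is obliged to keep passing once it has passed, \EMPTY\ can freeze the play, and one must guarantee that the resulting eventually-constant sequence of open sets still belongs to the arbitrary invariant property $P$. The ``shrink whenever possible'' reduction of the first step is what rescues this, since it confines \POINT's passes to basic neighborhoods that are minimal for the point just played, and on such a neighborhood the frozen continuation is forced and is therefore already won by the original strategy.
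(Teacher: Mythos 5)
Your proof is correct, and it shares the paper's two-phase skeleton: first modify the winning strategy so that ``passing'' (answering a move $\seq{x_n,U_n}$ with $U_n$ itself) is harmless, then de-duplicate immediate repetitions by \EMPTY; your second phase, and the two appeals to invariance of $P$ that compare the de-duplicated shadow play with the actual play, match the paper's almost exactly. Where you genuinely diverge is in the first phase, which is precisely the step that makes the frozen-play case (your $\mu<\omega$) survivable. The paper arranges for its $\Strategy'$ to pass only when the original strategy would keep answering $U_n$ forever under hypothetical repetition of \EMPTY's last move --- an infinite lookahead plus the bookkeeping of ``pretending'' that move was repeated $r$ times in all later queries --- so that the frozen continuation is literally a play of the original winning strategy. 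You instead shrink to a proper basic neighborhood whenever one exists, so that $\Strategy_1$ passes only on pairs $\seq{x,U}$ with $U$ minimal for $x$ in $\Basis$; there every $\Basis$-valued response is forced to equal $U$, so the frozen continuation is a legal (and hence won) play against $\Strategy_1$ itself. Your variant replaces a lookahead into the behavior of the strategy by a purely local topological criterion, and it makes explicit the $\mu<\omega$ case that the paper handles only implicitly through its phase-one construction; the only thing it uses that the paper's phase one does not is the restriction of \POINT's responses to the basis $\Basis$, which costs nothing since the proposition's conclusion is about $\SCG_P(X,\Basis)$ in any case.
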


\begin{proof}
  By Proposition~\ref{P:changeofbasis}, we may assume that we have a
  winning strategy $\Strategy$ for \POINT\ in $\SCG_P(X,\Basis)$.  We
  will refine \POINT's original strategy $\Strategy$ in two
  phases.

  We first define the strategy $\Strategy'$ to follow
  $\Strategy$ in all cases except when $\Strategy(\seq{x_i,U_i}_{i\leq
    n}) = U_n$. When this happens, we scan ahead repeating \EMPTY's
  last move until $\Strategy$'s response is different from $U_n$. If
  this never happens, we set $\Strategy'(\seq{x_i,U_i}_{i
    \leq n}) = U_n$. If, after
  some number $r$ of repetitions, we get a different answer $U_n'$, we
  define $\Strategy'(\seq{x_i,U_i}_{i\leq n}) = U_n'$ and pretend that
  \EMPTY's $n$th move was repeated $r$ times in all future queries to
  $\Strategy$. Since $P$ is invariant, the play without repetitions is
  winning for \POINT\ if and only if the play with repetitions is
  winning for \POINT.

  Next we define $\Strategy''$ from $\Strategy'$ as follows. Whenever
  \EMPTY\ repeats a move, \POINT\ initially responds (as $\Strategy'$
  requires) with \EMPTY's last played open set. However, if \EMPTY\
  suddenly plays differently, \POINT\ collapses \EMPTY's repeated
  plays to a single play before querying $\Strategy'$ on this and all
  future rounds. Again, since $P$ is invariant, the play with
  repetitions is winning for \POINT\ if and only if the play without
  repetitions is winning for \POINT. The strategy $\Strategy''$ is a
  stable winning strategy for \POINT\ in~$\SCG_P(X)$.
\end{proof}

Our next theorem implies the existence of winning trace strategies for
\POINT\ in every Choquet space.

\begin{theorem}\label{T:trace-strategy}
  Let $P$ be an invariant property and let $\Basis$ be an arbitrary
  basis for $X$.  If \POINT\ has a winning strategy in $\SCG_P(X)$
  then \POINT\ has a winning trace strategy in~$\SCG_P(X,\Basis)$.
\end{theorem}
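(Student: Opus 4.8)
The plan is to build the trace strategy by a bookkeeping construction that runs many copies of the original winning strategy $\Strategy$ in parallel, one copy for each possible open trace that could have been produced so far. By Proposition~\ref{P:stable-strategy} (applied with the basis $\Basis$) we may assume at the outset that $\Strategy$ is a stable winning strategy for \POINT\ in $\SCG_P(X,\Basis)$; stability is exactly what lets us treat the open trace (a set of pairs of basic open sets) rather than the full sequence of past moves, since repetitions no longer matter. The key definition is a function $\Strategy^*$ taking a triple $(x,U,T)$, where $T$ is a finite set of pairs from $\Basis$ that is a legal open trace, and returning a legal response. To define $\Strategy^*(x,U,T)$ I would fix, once and for all, a canonical enumeration of a play whose open trace is exactly $T$ and which ends with \EMPTY\ about to move; feeding $\Strategy$ that reconstructed partial play, followed by \EMPTY's move $(x,U)$, yields the value of $\Strategy^*$. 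The trace strategy $\Strategy_{\mathrm{tr}}$ is then: on a partial play $\seq{x_i,U_i}_{i\le n}$, compute the open trace $T = \set{\seq{U_i,V_i}:i<n}$ of the earlier rounds and output $\Strategy^*(x_n,U_n,T)$.

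The subtle point — and the main obstacle — is showing that any play against $\Strategy_{\mathrm{tr}}$ can be matched, round by round, with an honest play against $\Strategy$, so that winning for $\Strategy$ transfers to winning for $\Strategy_{\mathrm{tr}}$. The difficulty is that the open trace forgets not only the points \EMPTY\ played but also the \emph{order} in which the pairs were played; two different partial plays against $\Strategy_{\mathrm{tr}}$ could have the same open trace at round $n$, so when I "reconstruct" a play from $T$ I may pick a different reconstruction at round $n$ than the actual play dictates. I would handle this by induction, maintaining at each stage $n$ a partial play $\sigma_n$ of $\SCG_P(X,\Basis)$ against $\Strategy$ whose open trace equals $T_n := \set{\seq{U_i,V_i}:i<n}$, the open trace of the genuine play against $\Strategy_{\mathrm{tr}}$ at that stage. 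The induction step appends \EMPTY's move $(x_n,U_n)$ to $\sigma_n$; \POINT's response under $\Strategy$ to this extended play is, by construction of $\Strategy^*$ via the canonical reconstruction of $T_n$, equal to $V_n = \Strategy_{\mathrm{tr}}(\seq{x_i,U_i}_{i\le n})$ — here I may need to invoke stability and the change-of-basis machinery once more to absorb any discrepancy between the canonical reconstruction and $\sigma_n$, since both have open trace $T_n$ and $\Strategy$ is stable. Then $\sigma_{n+1}$ extends $\sigma_n$ by the pair $\seq{x_n,U_n,V_n}$ and has open trace $T_{n+1}$, closing the induction.

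Granting this matching, the limit play $\sigma_\omega$ against $\Strategy$ has the same set of played open sets as the play against $\Strategy_{\mathrm{tr}}$; more to the point, the two sequences $\seq{V_i}_{i<\omega}$ agree (or at worst are $\equiv$-equivalent after rearrangement, but in fact the construction keeps $V_i$ in register), so $\seq{V_i}_{i<\omega}\in P$ because $\sigma_\omega$ follows the winning strategy $\Strategy$ and $P$ is invariant. Hence $\Strategy_{\mathrm{tr}}$ is a winning strategy for \POINT\ in $\SCG_P(X,\Basis)$, and it is a trace strategy by construction since its output at round $n$ was computed from $(x_n,U_n)$ and the open trace of the earlier rounds alone. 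The one place I expect to spend real care is verifying that the canonical-reconstruction trick is consistent across rounds — i.e.\ that committing to one reconstruction of $T_n$ and later to one of $T_{n+1}$ does not produce incompatible demands on $\Strategy$; stability of $\Strategy$, together with part~\eqref{L:changeofbasis:g2b} of Proposition~\ref{P:changeofbasis} if a basis-alignment step is needed, is the tool that makes this go through.
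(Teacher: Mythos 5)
Your skeleton matches the paper's (pass to a stable winning strategy in $\SCG_P(X,\Basis)$ via Proposition~\ref{P:stable-strategy}, define $\Strategy^*$ on triples $(x,U,T)$ by simulating a play against $\Strategy$, and transfer the win using invariance of~$P$), but the step you yourself flag as the subtle point is exactly where the proposal breaks, and the tools you offer there cannot repair it. The open trace records only pairs of open sets, so in passing from a reconstruction of $T_n$ to one of $T_{n+1}$ you have lost the point $x_n$ that was actually played; yet $V_n$ was computed as $\Strategy$'s answer to a history ending in the move $(x_n,U_n)$, and a general strategy's subsequent answers genuinely depend on which point was played. Two partial plays with the same open trace but different points are \emph{not} interchangeable by stability: stability only neutralizes literal repetition of an identical move $(x_n,U_n)$ (the case $|T_n|<n$ in the paper), and Proposition~\ref{P:changeofbasis} only changes which basis the open sets are drawn from. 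Neither gives you that $\Strategy$ responds the same way to $\sigma_{n+1}$ and to the canonical reconstruction of $T_{n+1}$ when these differ in their points, so your induction does not close.

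The missing idea is the paper's device for recovering the simulated point from the trace alone. Given the simulated history $\seq{y_i,U_i,V_i}_{i<n}$ and \EMPTY's move $(x_n,U_n)$, the paper uses Zorn's Lemma to pick a maximal $Y\subseteq U_n$ on which $y\mapsto V_y:=\Strategy(y_0,U_0;\dots;y_{n-1},U_{n-1};y,U_n)$ is injective; maximality forces $U_n=\bigcup_{y\in Y}V_y$, so some $y_n\in Y$ has $x_n\in V_{y_n}$, and \POINT\ answers with $V_n=V_{y_n}$ (legal precisely because it contains $x_n$, even though it is $\Strategy$'s answer to the point $y_n$ rather than to $x_n$). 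Injectivity makes $y_n$ a function of $V_n$, hence of the trace, so setting $w(T_{n+1})=y_n$ yields a single coherent infinite play $\seq{y_i,U_i,V_i}_{i<\omega}$ against $\Strategy$. Your approach can be salvaged along the same lines if you insist that the canonical reconstruction of $T_{n+1}$ \emph{extend} that of $T_n$ by a point $y_n$ chosen so that $\Strategy$'s answer to $(y_n,U_n)$ after that reconstruction equals $V_n$ (such a point exists for every trace reachable against the trace strategy, since $x_n$ witnesses it); but as written, with independently chosen reconstructions patched together by an appeal to stability, the argument has a genuine gap.
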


\begin{proof}
  Let $\Strategy$ be a stable winning strategy for \POINT\
  in~$\SCG_P(X,\Basis)$ as per
  Proposition~\ref{P:stable-strategy}. (This is the only place where
  we use the fact that $P$ is invariant.) We will use $\Strategy$ to
  construct a function $\Strategy^*$, which in turn defines
  a trace strategy
  $\Strategy_t$ (as above) with
  \begin{equation*}
    \Strategy_t(\seq{x_i,U_i}_{i \leq n}) 
    = \Strategy^*(x_n,U_n,T_n),
  \end{equation*}
  where
  \begin{equation*}
    T_n = \set{\seq{U_i,\Strategy_t(\seq{x_j,U_j}_{j \leq
          i})} : i<n}.
  \end{equation*}
  We then show that $\Strategy_t$ is a winning strategy
  for \POINT\ in~$\SCG_P(X,\Basis)$.
  
  To ensure that $\Strategy_t$ is indeed a winning strategy for
  \POINT, we will simultaneously define an auxiliary
  function $w$ mapping nonempty open traces of partial plays
  to points in~$X$. This function $w$ will have the property
  that if $\seq{x_i,U_i,V_i}_{i< \omega}$ is any play
  against~$\Strategy_t$, then $\seq{y_i,U_i,V_i}_{i<\omega}$
  is a play against~$\Strategy$, where $y_i =
  w(\set{\seq{U_j,V_j}:j \leq i})$ for~$i < \omega$.

  The definition of $\Strategy^*(x,U,T)$ proceeds by induction on~$|T|$.
  As a base case, we define $\Strategy^*(x,U,\varnothing) =
  \Strategy(x,U)$; we do not need to define~$w(\varnothing)$.

  Suppose that we have specified
  $\Strategy^*(x,U,T)$ for all open traces of size less
  than~$n$. Let $\seq{x_i,U_i}_{i \leq n}$ be a partial play for
  \EMPTY\ against~$\Strategy_t$, and let $T_n$ be the corresponding
  open trace. In order to determine
  $\Strategy_t(\seq{x_i,U_i}_{i \leq n})$, we will define $V_n =
  \Strategy^*(x_n,U_n,T_n)$ and then define $y_n = w(T_{n+1})$, where
  as above $T_{n+1} = T_n \cup  \set{\seq{U_n, V_n}}$. 
  For convenience, write $V_i = \Strategy_t(\seq{x_j,U_j}_{j
    \leq i})$ for $i < n$.  By the induction hypothesis, if we define
  $y_i = w(\set{\seq{U_j,V_j}:j \leq i})$ for $i < n$, then we also have
  $V_i = \Strategy(\seq{y_j,U_j}_{j \leq i})$ for every~$i < n$.
  Moreover, we must choose $y_n$ and $V_n$ so that $V_n =
  \Strategy(\seq{y_i,U_i}_{i \leq n})$ in order to preserve the
  induction hypothesis.

  If $|T_n| < n$, then $V_n = \Strategy^*(x_n,U_n,T_n)$ and $y_n =
  w(T_n)$ have already been defined. However, the only way that
  $|T_n|<n$ could happen is if $\seq{y_i,U_i}_{i\leq n}$ includes
  redundant moves by~\EMPTY. Since $\Strategy$ is a stable strategy,
  we must have $V_n = \Strategy(\seq{y_i,U_i}_{i\leq n})$, since this
  equality was satisfied for the play obtained by contracting all
  redundant moves from $\seq{y_i,U_i}_{i\leq n}$.

  Now suppose $|T_n| = n$. Using Zorn's Lemma, find a maximal
  set $Y \subseteq U_n$ such that the map $y \mapsto V_y$ is an
  injection, where
$V_y = \Strategy(\seq{v_i,U_i}_{i \leq n})$, $v_i = y_i$ for $i <n$, and $v_n = y$.
  We necessarily have $U_n = \bigcup_{y \in Y} V_y$. So we
  can pick $y_n \in Y$ and $V_n = V_{y_n}$ such that $x_n \in V_n$.
  The fact that $y \mapsto V_y$ is an injection guarantees that
  defining $w(T_{n+1}) = y_n$ is sound.
\end{proof}
\subsection{Stationary strategies}\label{S:stationary}
Before proving our main result on the existence of
stationary winning strategies in generalized Choquet games
on spaces with open-finite bases, we will give a general
criterion for the existence of winning stationary strategies
in generalized Choquet games on any space.  The motivating
idea is that a stationary strategy for \POINT\ in the
Choquet game should respond to a move $\seq{x, U}$ with a
neighborhood of $x$ that is very small compared to $U$.

\begin{definition}
Let $X$ be a space with a basis $\Basis$, and let $\Strategy$ be a
strategy for \POINT\ in~$\SCG_P(X,\Basis)$.  We say that
$\seq{x,U,V}$ is a \emph{good triple} for~$\Strategy$ if $U \in \Basis$, $V
\in \Basis$, $x \in V \subseteq U$, and $V$ is contained in
every response of 
$\Strategy$ to a partial play by~\EMPTY\ against $\Strategy$
ending with the move $\seq{x,U}$. That is, $\seq{x,U,V}$ is a good triple for
$\Strategy$ if for every partial play $\seq{x_i,U_i,V_i}_{i < n}$ of
$\SCG_P(X,\Basis)$ against $\Strategy$ such that $U \subseteq
\bigcap_{i<n} V_i$, we have
\begin{equation*}
  V \subseteq \Strategy(x_0,U_0;\dots;x_{n-1},U_{n-1};x,U). 
\end{equation*}
We often suppress $\Strategy$, and simply say that $\seq{x,U,V}$ is a good
triple, when \POINT's strategy is clear from context.  
\end{definition}


\begin{definition}
  We say that \emph{$\Strategy$ has enough good triples},
  relative to a given basis $\Basis$, if for every open
  neighborhood $U \in \Basis$ of a point $x$ there is an open
  neighborhood $V \in \Basis$ of $x$ such that $\seq{x,U,V}$ is a
  good triple for~$\Strategy$.
\end{definition}

If $\Strategy$ is a stationary winning strategy, then
$\seq{x,U,V}$ is a good triple if and only if $x \in V \subseteq
\Strategy(x,U)$.  Therefore, a stationary strategy always
has enough good triples. On the other hand, any winning
strategy for \POINT\ with enough good triples leads to a
stationary winning strategy for~\POINT.

\begin{proposition}
  \label{P:good-triples}
  If $P$ is an invariant property, then \POINT\ has a stationary winning
  strategy in $\SCG_P(X)$ if \textup{(}and only
  if\textup{)} there is a basis $\Basis$ for~$X$ such that \POINT\ has
  a winning strategy in $\SCG_P(X,\Basis)$ with enough good triples.
\end{proposition}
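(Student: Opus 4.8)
The plan is to prove the nontrivial direction: given a basis $\Basis$ and a winning strategy $\Strategy$ for \POINT\ in $\SCG_P(X,\Basis)$ with enough good triples, I will build a stationary winning strategy. First I fix, for each neighborhood pair $x \in U \in \Basis$, a witness $G(x,U) \in \Basis$ with $x \in G(x,U)$ such that $\seq{x,U,G(x,U)}$ is a good triple for $\Strategy$; this is possible precisely because $\Strategy$ has enough good triples. I then define the candidate stationary strategy $\Strategy_s$ by $\Strategy_s(x,U) = G(x,U)$ for $x \in U \in \Basis$, and extend it to arbitrary open sets by composing with a basis-selection function as in Proposition~\ref{P:changeofbasis}; so it suffices to show $\Strategy_s$ is winning in $\SCG_P(X,\Basis)$.

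The key step is to show that any play $\seq{x_i,U_i,V_i}_{i<\omega}$ of $\SCG_P(X,\Basis)$ in which \POINT\ follows $\Strategy_s$ is winning. I will do this by constructing, alongside the given play, a shadow play against $\Strategy$ with the same sequence of open sets. Concretely, I argue by induction that the partial play $\seq{x_i,U_i,V_i}_{i \leq n}$ is itself (a legal initial segment of) a play against $\Strategy$ — or more carefully, that $V_n \subseteq \Strategy(x_0,U_0;\dots;x_n,U_n)$. The good-triple property gives exactly this: since $\seq{x_i,U_i,V_i}_{i<n}$ is a partial play against $\Strategy_s$ and $U_n \subseteq \bigcap_{i<n} V_i$ (this inclusion must be tracked through the induction, using that each $V_i = G(x_i,U_i)$ and the legality condition $U_{i+1}\subseteq V_i$), and since $V_n = G(x_n,U_n)$ makes $\seq{x_n,U_n,V_n}$ a good triple, we get $V_n \subseteq \Strategy(x_0,U_0;\dots;x_n,U_n)$. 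Writing $\widehat{V}_n = \Strategy(x_0,U_0;\dots;x_n,U_n)$, the sequence $\seq{x_i,U_i,\widehat{V}_i}_{i<\omega}$ is then a genuine play against the winning strategy $\Strategy$, so $\seq{\widehat{V}_i}_{i<\omega} \in P$. The chain of inclusions $\widehat{V}_{n+1} \subseteq U_{n+1} \subseteq V_n \subseteq \widehat{V}_n$ (the middle one by legality of \EMPTY's move, the outer two just shown) gives $\seq{V_i}_{i<\omega} \equiv \seq{\widehat{V}_i}_{i<\omega}$, and invariance of $P$ yields $\seq{V_i}_{i<\omega} \in P$. Hence $\Strategy_s$ is winning.

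The main subtlety — the step I expect to require the most care — is the bookkeeping in the induction that propagates the hypothesis $U_n \subseteq \bigcap_{i<n} V_i$, which is exactly the hypothesis the definition of ``good triple'' demands before it guarantees $V \subseteq \Strategy(\dots;x,U)$. One must check that \POINT's responses $V_i = G(x_i,U_i)$, together with \EMPTY's legality requirement $U_{i+1} \subseteq V_i$, force $U_n \subseteq V_{n-1} \subseteq \widehat{V}_{n-1} \subseteq \widehat{V}_{n-2} \subseteq \dots$, so that the partial play is of the form that the good-triple condition applies to. The converse direction of the proposition is immediate from the remark preceding it: if $\Strategy$ is itself stationary and winning, then $\seq{x,U,V}$ is a good triple exactly when $x \in V \subseteq \Strategy(x,U)$, so (taking $\Basis$ to be any basis, refined via Proposition~\ref{P:changeofbasis}) $\Strategy$ has enough good triples.
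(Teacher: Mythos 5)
Your proposal is correct and follows essentially the same route as the paper: define the stationary strategy by selecting a good-triple witness for each move, run a shadow play against the original strategy $\Strategy$, verify inductively that $\widehat{V}_{n+1} \subseteq U_{n+1} \subseteq V_n \subseteq \widehat{V}_n$ (which is exactly what the good-triple condition delivers once you note $U_n \subseteq V_{n-1} \subseteq \bigcap_{i<n}\widehat{V}_i$), and conclude by invariance of $P$. The subtlety you flag --- that the good-triple hypothesis must be checked against the shadow play following $\Strategy$, not the actual play following $\Strategy_s$ --- is precisely the bookkeeping the paper's induction carries out.
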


\begin{proof}
  Suppose that $\Strategy$ is a winning strategy for \POINT\
  in~$\SCG_P(X,\Basis)$ that has enough good triples. Define a
  stationary strategy $\Strategy_s$ in~$\SCG_P(X,\Basis)$ by simply
  choosing, when presented with a move $\seq{x,U}$, some $V \in \Basis$
  such that $\seq{x,U,V}$ is a good triple for~$\Strategy$.
  
  Let $\seq{x_i, U_i, V_i}_{i < \omega}$ be a play of
  $\SCG_P(X,\Basis)$ following~$\Strategy_s$. We must show
  that $\seq{V_i}_{i<\omega} \in P$. This is done by
  constructing a sequence $\seq{V'_i}_{i<\omega}$ of
  elements of $\Basis$ such that $V_{i+1} \subseteq V'_{i+1}
  \subseteq V_i$ for all~$i \in \omega$ and such that $\seq{x_i, U_i,
    V'_i}_{i<\omega}$ is a play of of $\SCG_P(X,\Basis)$
  following $\Strategy$. Since $\seq{V_i}_{i<\omega} \equiv
  \seq{V'_i}_{i<\omega}$ and $P$ is invariant, it will
  immediately follow that $\seq{V_i}_{i<\omega} \in P$.

  We proceed inductively. At round $0$,
  \EMPTY\ picks $\seq{x_0, U_0}$; so $V_0$ is chosen to complete a
  good triple.  Define $V'_0 = \Strategy(x_0, U_0)$, which means $V_0
  \subseteq V'_0$ by definition of good triple.

  Now at round $i+1$, we may assume by induction that $V_i \subseteq
  V'_i$. \EMPTY\ has played $x_{i+1} \in U_{i+1} \subseteq V_i$,
  which means that $\seq{x_{i+1},U_{i+1}}$ would be a legal move for
  \EMPTY\ in response to the partial play $ \seq{ x_k, U_k, V'_k}_{k \leq i}$ 
  in~$\SCG_P(X,\Basis)$. Now $\seq{x_{i+1}, U_{i+1},
    V_{i+1}}$ is a good triple, so we know that $V'_{i+1} =
  \Strategy(\seq{x_k,U_k}_{k \leq i+1})$ has the property that $x_{i+1} \in
  V_{i+1} \subseteq V'_{i+1} \subseteq U_{i+1} \subseteq
  V_i$.  Continuing this process through all $\omega$ rounds
  produces the desired play of $\SCG_P(X,\Basis)$.
\end{proof}

We are now prepared to prove our main result on the existence of
stationary winning strategies in generalized Choquet games. This
result applies, in particular, to the original Choquet game and to 
its variant in which \POINT\ is additionally required to
follow a convergent strategy.

\begin{theorem}
  \label{T:open-finite-stationary}
  Let $X$ be a space with an open-finite basis $\Basis$ and let $P$ be an
  invariant property of descending sequences of open
  subsets of~$X$.  If \POINT\ has a winning strategy in
  $\SCG_P(X)$ then \POINT\
  has a stationary winning strategy in~$\SCG_P(X)$.
\end{theorem}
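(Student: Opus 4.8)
The plan is to combine Theorem~\ref{T:trace-strategy} with Proposition~\ref{P:good-triples}. By Proposition~\ref{P:good-triples}, it suffices to produce a basis $\Basis'$ for $X$ and a winning strategy for \POINT\ in $\SCG_P(X,\Basis')$ that has enough good triples. We may take $\Basis' = \Basis$ itself, so the work is entirely in building a winning strategy with enough good triples from the winning trace strategy supplied by Theorem~\ref{T:trace-strategy}. First I would invoke Theorem~\ref{T:trace-strategy} to fix a winning trace strategy $\Strategy_t$ for \POINT\ in $\SCG_P(X,\Basis)$, with the associated function $\Strategy^*$ so that $\Strategy_t(\seq{x_i,U_i}_{i\leq n}) = \Strategy^*(x_n,U_n,T_n)$ where $T_n$ is the open trace of the partial play. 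The point of passing to a trace strategy is that \POINT's response now depends only on the current move $\seq{x,U}$ and the finite \emph{set} of previously played pairs; the open-finiteness of $\Basis$ will be used to bound how many distinct such traces can arise in a play descending below a fixed $U$.

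The key step is to show that $\Strategy_t$ can be modified to have enough good triples. Fix a basic neighborhood pair $x \in U \in \Basis$. I want to find $V \in \Basis$ with $x \in V \subseteq U$ such that $V \subseteq \Strategy_t(\text{any partial play against }\Strategy_t\text{ ending in }\seq{x,U})$. The crucial observation is that any partial play $\seq{x_i,U_i,V_i}_{i<n}$ against $\Strategy_t$ with $U \subseteq \bigcap_{i<n} V_i$ must have $U \subseteq U_i \subseteq \Basis$-predecessors, so the $U_i$ that occur — and hence (since $\Strategy_t$ responds within the current move) the $V_i$ — all lie in $\Basis[{\supseteq U}]$, which is finite because $\Basis$ is open-finite. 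Therefore there are only finitely many possible open traces $T$ of the form $\set{\seq{U_i,V_i}:i<n}$ that can precede a move $\seq{x,U}$ in a play against $\Strategy_t$ descending below $U$: each such trace is a subset of the finite set $\Basis[{\supseteq U}] \times \Basis[{\supseteq U}]$. Taking $V$ to be the intersection of the finitely many sets $\Strategy^*(x,U,T)$, over all such traces $T$, and intersecting with $U$, gives a set with $x \in V \subseteq U$ contained in every response of $\Strategy_t$ to a partial play ending in $\seq{x,U}$. The only subtlety is that $V$ so defined is an open set but need not be a basis element; I would remedy this by first applying Proposition~\ref{P:changeofbasis}\eqref{L:changeofbasis:g2b} to obtain a refinement of the strategy that plays inside $\Basis$, choosing a basic $B(x,V) \in \Basis$ with $x \in B(x,V) \subseteq V$, and verify that $\seq{x,U,B(x,V)}$ is then a good triple for the refined strategy. (Alternatively, one can run the good-triples argument of Proposition~\ref{P:good-triples} with the open set $V$ directly, since intersecting two good triples' second coordinates preserves the good-triple property; but routing through Proposition~\ref{P:changeofbasis} keeps everything within the stated framework.) Since $x \in U \in \Basis$ was arbitrary, the refined strategy has enough good triples, and Proposition~\ref{P:good-triples} yields a stationary winning strategy for \POINT\ in $\SCG_P(X,\Basis)$, hence in $\SCG_P(X)$ by Proposition~\ref{P:changeofbasis}\eqref{L:changeofbasis:b2g}.

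The main obstacle is verifying rigorously that only finitely many open traces can precede a given move $\seq{x,U}$ in a play descending below $U$ — this is where open-finiteness is genuinely needed and where one must be careful about exactly which sets $U_i, V_i$ can appear. In particular one needs that the requirement $U \subseteq \bigcap_{i<n} V_i$ (which is what "a partial play that could be continued by $\seq{x,U}$" amounts to, after the opponent plays legally) forces every $U_i \supseteq U$ and every $V_i \supseteq U$, so both coordinates of every pair in the trace lie in the finite set $\Basis[{\supseteq U}]$. A secondary point requiring care is the interplay between the trace strategy (which ignores all but the last point played by \EMPTY) and the good-triple definition (which quantifies over all partial plays ending in $\seq{x,U}$): because $\Strategy_t$'s response depends on the trace but not on the earlier points, two partial plays with the same open trace and the same final move $\seq{x,U}$ elicit the same response, so the finitely-many-traces count really does control the finitely-many-responses count. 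Once these combinatorial points are nailed down, the rest is the routine bookkeeping already packaged in Propositions~\ref{P:changeofbasis} and~\ref{P:good-triples}.
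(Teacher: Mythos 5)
Your proposal is correct and follows essentially the same route as the paper: pass to a winning trace strategy in $\SCG_P(X,\Basis)$ via Theorem~\ref{T:trace-strategy}, use open-finiteness of $\Basis$ to see that only finitely many open traces can precede a move $\seq{x,U}$ (since every set in such a trace lies in the finite set $\Basis[{\supseteq U}]$), intersect the finitely many responses $\Strategy^*(x,U,T)$ and shrink to a basic neighborhood to get enough good triples, then apply Proposition~\ref{P:good-triples}. The only superfluous step is the detour through Proposition~\ref{P:changeofbasis}\eqref{L:changeofbasis:g2b}: the trace strategy already plays in $\Basis$, so one simply chooses a basic $V \subseteq W$ containing $x$, exactly as your parenthetical alternative suggests.
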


\begin{proof}
  We will show that every trace strategy for \POINT\ in
  $\SCG_P(X,\Basis)$ has enough good triples. This is
  sufficient, because if \POINT\ has a winning strategy in
  $\SCG_P(X)$ then \POINT\ has a winning trace strategy in
  $\SCG_P(X,\Basis)$ by  Theorem~\ref{T:trace-strategy}. If this trace strategy
  has enough good triples, then \POINT\ has a stationary
  winning strategy in~$\SCG_P(X)$ by
  Proposition~\ref{P:good-triples}.

  Let $\Strategy$ be a winning trace strategy for \POINT\ in
  $\SCG_P(X,\Basis)$. 
  Let $\mathcal{T}_U$ be the set of all open traces of finite partial
  plays following $\Strategy$ for which $\seq{x,U}$ is a valid next move
  for \EMPTY.  Since $\Basis$ is open-finite, there are only finitely
  many pairs of open sets that can occur in elements of
  $\mathcal{T}_U$, because each set in the pair must be a
  superset of~$U$.  Therefore, $\mathcal{T}_U$ is finite, and hence the set
  \begin{equation*}
    W = \bigcap \set{\Strategy^*(x,U,T) : T \in \mathcal{T}_U}
  \end{equation*}
  is an open neighborhood of~$x$. If $V \in \Basis$ is a neighborhood
  of $x$ with $V \subseteq W$ then $\seq{x,U,V}$ is a good triple
  for~$\Strategy$.
\end{proof}

\noindent Theorem~\ref{T:second-countable-strategy} is an immediate
consequence of Theorem~\ref{T:open-finite-stationary} and
Proposition~\ref{P:T1-2ctble->open-finite}.

\section{Convergent strategies}\label{S:convergent}

When $X$ is a metric space, either \POINT\ or \EMPTY\ can
ensure that the intersection of open sets in a play of the
Choquet game consists of at most one point, by selecting
open sets of smaller and smaller radius as the play
progresses. We generalize this to non-metric spaces via the
notion of convergent strategies, as defined in the
introduction.
Thus, if $X$ is $T_1$, the intersection of open sets in a play of
$\SCG(X)$ following a convergent strategy contains at most
one point. However, we do not require a convergent strategy
to be a winning strategy for \POINT.  

The work of Galvin~and
Telg{\'a}rsky~\cite{GalvinTelgarsky86} can be directly applied to
study convergent strategies, as the following proposition demonstrates.

\begin{proposition}
  \label{P:convergent-stationary}
  \POINT\ has a convergent strategy in $\SCG(X)$ if and only if
  \POINT\ has a stationary convergent strategy in~$\SCG(X)$.
  \textup{(}This strategy may not be a winning strategy in~$\SCG(X)$.\textup{)}
\end{proposition}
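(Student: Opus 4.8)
The plan is to reduce this to Theorem~\ref{T:galvin-telgarsky} by packaging ``convergence'' as a monotone property of descending sequences of open sets. First I would define, for a fixed space $X$, the property
\[
  C = \set{\seq{V_i}_{i<\omega} : \set{V_i : i<\omega}\text{ is a neighborhood basis for every point of }\textstyle\bigcap_i V_i}.
\]
The key observation is that $C$ is monotone in the sense of the paper. Indeed, suppose $\seq{V_i}_{i<\omega}\in C$ and $\seq{U_i}_{i<\omega}\leq\seq{V_i}_{i<\omega}$, where the latter sequence is also descending. If $x\in\bigcap_i U_i$ then, since each $U_i$ lies inside some earlier $V_j$ (using that $\seq{U_i}$ is descending together with the definition of $\leq$, which gives each $V_j$ a $U_i\subseteq V_j$), we get $x\in\bigcap_j V_j$; and any neighborhood $N$ of $x$ contains some $V_j$ by hypothesis, hence contains some $U_i\subseteq V_j$. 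So $\set{U_i}$ is a neighborhood basis at $x$, i.e.\ $\seq{U_i}_{i<\omega}\in C$. Thus $C$ is monotone, hence invariant.

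Next I would observe that a strategy $\Strategy$ for \POINT\ in $\SCG(X)$ is a convergent strategy, in the sense of Definition~\ref{D:convergent}, precisely when it is a winning strategy for \POINT\ in $\SCG_C(X)$: in any play following $\Strategy$, the outcome sequence $\seq{V_i}_{i<\omega}$ must lie in $C$. (Note $\SCG_C(X)$ is a generalized Choquet game in which both players still make legal Choquet moves — each $U_i$ must contain a point $x_i$ and sit inside $V_{i-1}$ — only the payoff condition has changed; since by the remark after Definition~\ref{D:convergent} the presence of a convergent strategy already forces $X$ to be $T_1$ when $X$ is $T_0$, the point-selection aspect of the moves causes no trouble, and in any case it is irrelevant to whether the outcome lies in $C$.) Hence: \POINT\ has a convergent strategy in $\SCG(X)$ iff \POINT\ has a winning strategy in $\SCG_C(X)$, and likewise with ``stationary'' prepended on both sides, since the notion of stationarity (Definition~\ref{D:positional}) is identical for $\SCG(X)$ and $\SCG_C(X)$.

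With these identifications the proposition is immediate: if \POINT\ has a convergent strategy in $\SCG(X)$ then \POINT\ has a winning strategy in $\SCG_C(X)$, so by Theorem~\ref{T:galvin-telgarsky} (applicable because $C$ is monotone) \POINT\ has a stationary winning strategy in $\SCG_C(X)$, which is exactly a stationary convergent strategy in $\SCG(X)$; the converse direction is trivial. The parenthetical caveat in the statement is automatic, since nothing in this argument asserts that the resulting strategy also achieves $\bigcap_i V_i\neq\varnothing$. I expect the only real point requiring care is the verification that $C$ is monotone — specifically making sure the argument correctly uses that the smaller sequence $\seq{U_i}$ is itself descending, so that ``$U_i\subseteq V_j$ for one $i$'' upgrades to ``cofinally many $U_i$ lie in $V_j$'' — after which everything is bookkeeping. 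One could alternatively phrase the whole proof without naming $C$, by checking directly that the Galvin--Telg\'arsky construction preserves convergence, but routing through the monotone-property framework already set up in Section~\ref{S:generalized} is cleaner and is clearly the intended approach.
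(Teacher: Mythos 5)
Your proposal is correct and is essentially identical to the paper's proof: the paper also defines the monotone property ``$\set{U_i}$ is a neighborhood basis for every point of $\bigcap_i U_i$,'' identifies convergent strategies with winning strategies for the corresponding generalized Choquet game, and invokes Theorem~\ref{T:galvin-telgarsky}. Your explicit verification of monotonicity is a detail the paper leaves to the reader, and it is carried out correctly.
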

\begin{proof}
  Define a property $P$ consisting of all descending sequences $\seq{U_i}_{i <
    \omega}$ of open sets of $X$ 
  such that $\{U_i \mid i \in \omega\}$ is a neighborhood basis for
  every point in $\bigcap_{i < \omega} U_i$. To say that \POINT\ has a
  convergent strategy for $\SCG(X)$ is exactly the same as saying that
  \POINT\ has a winning strategy in $\SCG_P(X)$. Because $P$ is a
  monotone property, Theorem~\ref{T:galvin-telgarsky} applies to
  $\SCG_P(X)$, allowing any winning strategy in $\SCG_P(X)$ to be
  converted to a stationary winning strategy for $\SCG_P(X)$, which in
  turn is a stationary convergent stategy for~$\SCG(X)$.
\end{proof}

The proposition that any open continuous image of
a Choquet space is itself a Choquet space is listed as an
exercise by \mbox{Kechris~\cite[8.16]{Kechris-CDST}.} We isolate
the proof here so that we can refer to it during the proof
of Theorem~\ref{T:appl-oimage}. 

\begin{proposition}\label{P:closed-open}
  Assume that $\Strategy_Z$ is a winning strategy for
  \POINT\ in $\SCG(Z)$ and that there is an open continuous
  surjection from $Z$ to $X$. Then there is a winning
  strategy $\Strategy_X$ for \POINT\ in $\SCG(X)$.
\end{proposition}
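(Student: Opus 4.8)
The plan is to transfer a winning strategy for \POINT\ across an open continuous surjection $f\colon Z \to X$ by having \POINT\ simulate the game on $Z$ in the background. First I would fix an open continuous surjection $f\colon Z\to X$ and a winning strategy $\Strategy_Z$ for \POINT\ in $\SCG(Z)$. Given a move $\seq{x_n, U_n}$ by \EMPTY\ in $\SCG(X)$, I want to lift it to a legal move $\seq{z_n, \tilde U_n}$ in $\SCG(Z)$: since $f$ is surjective and continuous, I can pick $z_n \in f^{-1}(x_n)$ lying inside the preimage of $U_n$ and, more carefully, inside the preimage of the previous $X$-move $V_{n-1}$ together with the previous $Z$-response; concretely I take $z_n$ to be any point of $f^{-1}(\{x_n\}) \cap \tilde V_{n-1}$ where $\tilde V_{n-1}$ is \POINT's previous response in the simulated $Z$-game, and then set $\tilde U_n = f^{-1}(U_n) \cap \tilde V_{n-1}$, which is an open neighborhood of $z_n$ contained in $\tilde V_{n-1}$. (For $n=0$ just take $\tilde U_0 = f^{-1}(U_0)$.) The key point that makes this legal is that $f^{-1}(\{x_n\})$ meets $\tilde V_{n-1}$: this holds because $x_n \in U_n \subseteq V_{n-1} = f(\tilde V_{n-1})$, the last equality coming from the fact that \POINT's $X$-responses are chosen as $f$-images of the $Z$-responses.

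Next I would define \POINT's response in $\SCG(X)$. Let $\tilde V_n = \Strategy_Z(\seq{z_i, \tilde U_i}_{i \le n})$ be \POINT's move in the simulated $Z$-game; this is an open set with $z_n \in \tilde V_n \subseteq \tilde U_n$. Because $f$ is an open map, $V_n := f(\tilde V_n)$ is open in $X$, and it satisfies $x_n = f(z_n) \in V_n$ and $V_n = f(\tilde V_n) \subseteq f(\tilde U_n) \subseteq f(f^{-1}(U_n)) \subseteq U_n$; when $n \ge 1$ we likewise get $V_n \subseteq f(\tilde V_{n-1}) = V_{n-1}$. So $\Strategy_X(\seq{x_i,U_i}_{i\le n}) := f(\tilde V_n)$ is a legal response. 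This $\Strategy_X$ depends only on the sequence of \EMPTY's moves (plus the choices of lifted points, which can be fixed by a choice function once and for all), so it is a genuine strategy.

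Finally I would check that $\Strategy_X$ is winning. Any play $\seq{x_i,U_i,V_i}_{i<\omega}$ of $\SCG(X)$ following $\Strategy_X$ is shadowed by a play $\seq{z_i,\tilde U_i,\tilde V_i}_{i<\omega}$ of $\SCG(Z)$ following $\Strategy_Z$, hence $\bigcap_i \tilde V_i \ne \varnothing$; pick $z \in \bigcap_i \tilde V_i$, and then $f(z) \in \bigcap_i f(\tilde V_i) = \bigcap_i V_i$, so $\bigcap_i V_i \ne \varnothing$ and \POINT\ wins. I expect the main obstacle to be purely bookkeeping: making the lifting of \EMPTY's moves well-defined and legal at every stage, which amounts to the single observation $f(\tilde V_{n-1}) = V_{n-1}$ so that the fiber $f^{-1}(\{x_n\})$ genuinely hits the previous $Z$-response; once that invariant is maintained inductively, openness of $f$ handles legality of \POINT's responses and continuity/surjectivity handle the lifting, and the win condition transfers immediately by pushing a point of $\bigcap_i \tilde V_i$ forward.
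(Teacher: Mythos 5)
Your proposal is correct and follows essentially the same back-and-forth simulation as the paper's proof: lift \EMPTY's move $\seq{x_n,U_n}$ to $\seq{z_n, f^{-1}(U_n)\cap\tilde V_{n-1}}$ using the invariant $V_{n-1}=f(\tilde V_{n-1})$, respond with the $f$-image of $\Strategy_Z$'s answer, and push a point of $\bigcap_i\tilde V_i$ forward to witness that \POINT\ wins. No substantive differences.
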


\begin{proof}
  Let $\Strategy_Z$ be a winning strategy for \POINT\ in
  $\SCG(Z)$ and let $f\colon Z \to X$ be an open continuous
  surjection. We inductively define a strategy $\Strategy_X$
  for \POINT\ in $\SCG(X)$. The construction uses a
  back-and-forth technique following the diagram below.
\begin{equation*}
\begin{CD}
(\hx_0, \hU_0) 
@>{\Strategy_Z}>> \hV_0 
@>>> \cdots  
@>>> \hV_{k-1} @>>> (\hx_k, \hU_k)
@>{\Strategy_Z}>> \hV_k
@>>> \cdots  
\\
@AAA @VfVV @.
@VfVV @AAA @VfVV \\
(x_0, U_0) @>{\Strategy_X}>> V_0 
@>>> \cdots
@>>> V_{k-1} @>>> (x_k, U_k) @>{\Strategy_X}>> V_k
@>>> \cdots  
\end{CD}
\end{equation*}

At round $0$, given $x_0 \in U_0 \subseteq X$,
choose some $\hx_0 \in Z$ with $f(\hx_0) = x_0$, and let
$\hU_0 = f^{-1}(U_0)$. Then $\hx_0 \in \hU_0$. Let $\hV_0 =
\Strategy_Z(\hx_0, \hU_0)$; so $\hx_0 \in \hV_0$, which means
$f(\hx_0) = x_0 \in f(\hV_0)$.  Let $V_0$ be $f(\hV_0)$. 
Because $\hV_0 \subseteq \hU_0 =
f^{-1}(U_0)$, we have $V_0 \subseteq U_0$. Thus $V_0$ is a legal
first move for \POINT\ in response to $\seq{x_0, U_0}$.

Now at stage $k > 0$, given $\seq{x_k, U_k}$, let $\hU_k =
f^{-1}(U_k) \cap \hV_{k-1}$.  We know that $x_k \in U_k \subseteq
f(\hV_{k-1})$, which means that there is some point $\hx_k \in
\hV_{k-1}$ with $f(\hx_k) = x_k$.  Because $f(\hx_k) = x_k \in
U_k$, we see that $\hx_k \in f^{-1}(U_k)$, and thus $\hx_k \in
\hU_k$. Also, $\hU_k \subseteq \hV_{k-1}$, by construction, which
means that 
$\seq{\hx_k, \hU_k}$ is a legal move for \EMPTY\ in $\SCG(Z)$ in
response to the partial play $\seq{\hx_0, \hU_0, \ldots, \hV_{k-1}}$.
Let $\hV_k = \Strategy_Z(\hx_0, \hU_0, \ldots,
\hx_k,\hU_k)$ and define $V_k = f(\hV_k)$. Because $\hV_k \subseteq
\hU_k \subseteq f^{-1} (U_k)$, we have $V_k \subseteq U_k$. Because $\hx_k \in
\hV_k$, we have $x_k \in V_k$. Thus $V_k$ is a legal move for
\POINT\ in $\SCG(X)$ for this round.
\end{proof}

Our first theorem of this section characterizes the $T_1$ spaces
for which there is a convergent strategy for \POINT\ in the
Choquet game.

\begin{theorem} \label{T:appl-oimage}
  Let $X$ be a $T_1$ space. Then $X$ is the open continuous image of a
  complete metric space if and only if \POINT\ has a
  convergent winning strategy in~$\SCG(X)$.  Moreover, the
  metric space can be taken to have the same weight as~$X$.
\end{theorem}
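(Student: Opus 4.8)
The plan is to prove both directions separately, with the forward direction (from a convergent winning strategy for \POINT\ to an open continuous image representation) being the substantive one and the reverse direction following quickly from the machinery already in place.

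For the reverse direction, suppose $X$ is the open continuous image of a complete metric space $Z$ under $f \colon Z \to X$. Since $Z$ is a complete metric space, \POINT\ has a convergent stationary winning strategy $\Strategy_Z$ in $\SCG(Z)$: respond to $\seq{\hx,\hU}$ by an open ball around $\hx$ of radius at most half the distance from $\hx$ to the complement of $\hU$ (and at most $2^{-i}$ on round $i$, which can be folded into the stationary response by shrinking). I would then run the back-and-forth construction from the proof of Proposition~\ref{P:closed-open} verbatim, using $\Strategy_Z$ to define $\Strategy_X$. The only thing left to check is that the resulting $\Strategy_X$ is convergent: the open sets $\hV_i$ played in $Z$ shrink to a single point $\hz \in \bigcap_i \hV_i$ (by completeness and the radius condition) and form a neighborhood basis at $\hz$; since $f$ is open and continuous, the images $V_i = f(\hV_i)$ then form a neighborhood basis at $f(\hz)$, and $\bigcap_i V_i = \{f(\hz)\}$ because $X$ is $T_1$ and the intersection of a convergent play contains at most one point. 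Hence $\Strategy_X$ is a convergent winning strategy.

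For the forward direction, suppose \POINT\ has a convergent winning strategy in $\SCG(X)$. By Proposition~\ref{P:convergent-stationary} I may assume the strategy is stationary; call it $\Strategy$, and let $w$ be the weight of $X$, with $\Basis$ a basis of size $w$. By Proposition~\ref{P:changeofbasis} I may further assume $\Strategy$ only plays basic open sets and only responds to basic neighborhood pairs, so $\Strategy \colon \{\seq{x,U} : x \in U \in \Basis\} \to \Basis$. The idea is to build the complete metric space $Z$ as a space of "legal plays": sequences $\seq{x_i, U_i}_{i<\omega}$ where $x_0 \in U_0 \in \Basis$, each $\seq{x_{i+1},U_{i+1}}$ is a legal \EMPTY\ move against $\Strategy$ following the partial play (so $x_{i+1} \in U_{i+1} \subseteq \Strategy(x_i,U_i)$), with the convergence requirement that $\{\Strategy(x_i,U_i) : i<\omega\}$ is a neighborhood basis at the unique point of $\bigcap_i \Strategy(x_i,U_i)$ — which is nonempty since $\Strategy$ is winning, and a singleton since the strategy is convergent and $X$ is $T_1$. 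Map such a play to that unique point; call this $f$. One metrizes $Z$ by a metric in the spirit of the Baire-space/Choquet construction: declare $\seq{x_i,U_i}$ and $\seq{x'_i,U'_i}$ to be within $2^{-n}$ when their first $n$ coordinates agree, except that one must be careful because plays that eventually agree should be identified if they converge to the same point; concretely I would quotient by an appropriate equivalence or, more cleanly, use the standard trick of letting $Z$ be the set of maximal plays with the topology generated by basic clopen-ish sets $N_s$ (plays extending a finite legal partial play $s$) and verify this is metrizable by Aleksandrov's criterion or directly, and complete because any Cauchy sequence of plays determines a limit play whose convergence condition is inherited. The surjectivity of $f$ uses that for any $x$ and any $U \in \Basis_x$, one can start a play at $\seq{x,U}$ and, since the game is won by \POINT\ and the strategy is convergent, extend it to a full play converging to $x$ (choosing $x_{i+1} = x$ at every step works, giving a play whose strategy-responses form a neighborhood basis at $x$). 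Openness and continuity of $f$: continuity because $f(N_s)$ is contained in $\Strategy$ of the last pair of $s$ and these shrink along any play; openness because $f(N_{\seq{x,U}}) = \Strategy(x,U)$ — here the convergence of the strategy is exactly what guarantees that every point of $\Strategy(x,U)$ is hit by some play through $\seq{x,U}$, since we can route any target point $y \in \Strategy(x,U)$ through a play starting $\seq{x,U}, \seq{y, \Strategy(x,U)}, \ldots$.

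The main obstacle, and the step requiring the most care, is constructing the complete metric (equivalently, the topology) on the space of plays $Z$ so that $f$ comes out both open and continuous simultaneously, and verifying completeness. The tension is that the naive product-of-discrete-coordinates topology on plays is metrizable and complete but need not make $f$ open (too many plays map to the same point, but openness of $f$ on the basic sets $N_{\seq{x,U}}$ still needs every point of $\Strategy(x,U)$ to be an image — this is fine — while continuity needs $f(N_s)$ small, which forces us to use the strategy-responses rather than the $U_i$'s as the controlling neighborhoods). I expect the clean solution is: topologize $Z$ by the metric $d(p,q) = 2^{-n}$ where $n$ is least such that the $n$-th strategy-responses differ (or $0$ if they never differ), after first deleting from each play any "stalling" so that responses strictly decrease — then $d$ is an ultrametric, $(Z,d)$ is complete because a Cauchy sequence stabilizes coordinate-by-coordinate to a legitimate play (the winning/convergence conditions are closed under this limit), and $f$ is continuous because $d(p,q) \le 2^{-n}$ forces $f(p), f(q)$ to lie in a common $n$-th strategy-response whose diameter-analogue shrinks. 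Openness of $f$ then reduces to the routing argument above. Finally, $|Z| \le |\Basis|^{\omega} \le 2^w$ at worst, but a more careful count — plays are determined by their sequence of basic pairs drawn from a basis of size $w$, and one restricts to plays converging to their limit point so that the weight of $Z$ (not its cardinality) is what matters — gives weight exactly $w$: a basis for $Z$ of size $w$ is $\{N_s : s \text{ a finite legal partial play}\}$, and there are only $w$ such $s$ since each is a finite tuple from a set of size $w$. This yields the moreover clause.
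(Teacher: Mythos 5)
Your first direction (complete metric preimage implies convergent winning strategy) is essentially the paper's argument: lift through the back-and-forth of Proposition~\ref{P:closed-open} using a convergent winning strategy on $Z$ whose responses have radii tending to $0$. Your way of finishing --- $\set{f(\hV_i)}$ is a neighborhood basis at $f(\hz)$ because $f$ is open and continuous, and then $T_1$ forces $\bigcap_i V_i = \set{f(\hz)}$ --- is a clean variant of the paper's check and is correct.

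The other direction has a genuine gap at its very first step. You invoke Proposition~\ref{P:convergent-stationary} to ``assume the strategy is stationary,'' but that proposition only converts a convergent strategy into a stationary \emph{convergent} strategy; it explicitly does not preserve the property of being \emph{winning}. The reason is structural: ``convergent'' is a monotone property, so Theorem~\ref{T:galvin-telgarsky} applies to it, but ``convergent and winning'' is only invariant, not monotone (nonemptiness of $\bigcap_i V_i$ is not inherited by finer sequences), so the Galvin--Telg{\'a}rsky reduction is unavailable. Everything downstream of this step uses that $\Strategy$ is winning: your map $f$ sends a play to \emph{the} point of $\bigcap_i \Strategy(x_i,U_i)$, which requires that intersection to be nonempty, and completeness of $Z$ requires the set of admissible plays to be closed in the product, which fails if you must excise the plays with empty intersection (an infinitary, non-closed condition). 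The paper's fix is to reduce instead to a winning \emph{trace} strategy via Theorem~\ref{T:trace-strategy}, which preserves every invariant property, and to take $Z$ to be the set of open traces $\seq{U_n,V_n}_{n<\omega}$, a closed subset of $\prod_{n<\omega}\Basis^2$ cut out by finitary conditions. This also repairs two secondary defects in your write-up: (i) the metric ``$2^{-n}$ at the first index where the strategy responses differ'' is only a pseudometric on your space of plays $\seq{x_i,U_i}_{i<\omega}$, since distinct plays can have identical response sequences; and (ii) your weight count is not justified as stated, because a finite legal partial play records points of $X$ as well as basic open sets, so there are up to $|X|\cdot w(X)$ of them and $|X|$ may exceed $w(X)$ even for $T_1$ spaces. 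Indexing by open traces gives exactly $w(X)$ basic sets and makes $f$ total and well defined. (As a sanity check that the stationary reduction cannot be free: the paper only obtains a stationary convergent \emph{winning} strategy under the additional hypothesis of metacompactness, in Theorem~\ref{T:appl-okcimage}.)
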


\begin{proof}
  For the forward direction, suppose $f \colon Z \to X$ is an open
  continuous surjection from a complete metric space
  $Z$ to a $T_1$ space~$X$.  Let $\Strategy_Z$ be a
  convergent winning trace strategy for \POINT\ in~$\SCG(Z)$ with
  the property that the open sets played by \POINT\ in any
  play following $\Strategy_Z$ have radii converging to $0$,
  and thus the sequence of points played by \EMPTY\ is a
  Cauchy sequence. The canonical winning strategy for
  \POINT\ in $\SCG(Z)$ has these properties. Construct a
  winning strategy $\Strategy_X$ for \POINT\ in $\SCG(X)$ exactly as in
  the proof of Proposition~\ref{P:closed-open}.  

  We must prove that $\Strategy_X$ is a convergent strategy.
  Let $\seq{x_k, U_k}_{k <M \omega}$ be a play of $\SCG(X)$ following
  $\Strategy_X$. Then there is a corresponding play $\seq{\hx_k,
  \hU_k}_{k < \omega}$ as defined in the construction of $\Strategy_Z$,
  and a corresponding sequence $\seq{\hV_k}_{k < \omega}$. 
  
  Because $\Strategy_Z$ is a convergent winning strategy for
  \POINT\ in $\SCG(Z)$, there is a single point $z \in
  \bigcap_i \hU_i$, which is the limit of the sequence
  $\seq{\hx_i}_{i < \omega}$.  To see that $U = \bigcap_i U_i$ is a
  singleton, suppose $y$ and $z$ are distinct points of~$U$.
  Then there is a sequence $\seq{\hy_i}_{i < \omega}$ such that $\hy_i
  \in \hU_i$ and $f(\hy_i) = y$, and a sequence
  $\seq{\hz_i}_{i < \omega}$ such that $\hz_i \in \hU_i$ and $f(\hz_i) =
  z$. Now, because the radii of $\seq{\hU_i}_{i < \omega}$ converge
  to~$0$, and $Z$ is a complete space, both $\seq{\hy_i}_{i
    < \omega}$
  and $\seq{\hz_i}_{i < \omega}$ are convergent, and have the same
  limit~$\hl$. Now let $W \subseteq X$ be an open
  neighborhood of $z$ with $y \not \in W$. Then $\hl \in
  f^{-1}(W)$ and so $\seq{\hy_i}_{i < \omega}$ is eventually in
  $f^{-1}(W)$, which is impossible because $y \not \in W$
  but $f(\hy_i) = y$.
  
  This shows that $U$ contains a single point $z = f(\hz)$.
  Now let $W' \subseteq X$ be any open neighborhood of~$z$.
  Then $f^{-1}(W')$ is an open neighborhood of~$\hz$, and so
  $\hV_k \subseteq f^{-1}(W')$ for some $k$. Then $z \in V_k
  \subseteq W'$; this shows that $\Strategy_X$ is a
  convergent winning strategy for \POINT.
  
  For the converse, assume that $X$ is a $T_1$ space and
  that \POINT\ has a convergent winning strategy $\Strategy$
  in~$\SCG(X)$. Note that the property that a play
  of $\SCG(X)$ is convergent and winning is an invariant
  property.  Thus, by Theorem~\ref{T:trace-strategy}, we may
  assume $\Strategy$ is a trace strategy.
 
  Let $\Basis$ be any basis for $X$ and let $S \subseteq
  \prod_{n<\omega} \Basis^2$ be the set of all descending
  sequences $\seq{U_n, V_n}_{n<\omega}$ of pairs of elements
  of $\Basis$ such that for every $n < \omega$ there is some $x
  \in U_{n}$ such that
\[
  U_{n+1} \subseteq V_n = \Strategy(U_n, x, \set{\seq{U_i,V_i} : i < n}).
\]
  Now $\prod_{n<\omega} \Basis^2$ has a natural complete
  metric: the distance between two sequences is $2^{-n}$ when
  $n$ is the index of the first position where the sequences
  differ.  Moreover, $S$ is closed as a subset of
  $\prod_{n<\omega} \Basis^2$, and thus $S$ is a complete
  metric space.
  
  Any sequence $\seq{U_n, V_n}_{n < \omega} \in S$ will have a single
  point of $X$ in $\bigcap_{i} V_i$, because
  $\Strategy$ is a convergent winning strategy for \POINT\ 
  and $X$ is $T_1$. Thus there is a well-defined map
  $f\colon S
  \to X$ such that $f(\seq{U_n,V_n}_{n<\omega})$ is the unique
  element of $\bigcap_{n} V_n$. To check that this
  map is continuous, fix a point $x$ in an open set $U$, and
  a sequence $s \in S$ with $f(s) = x$. Then, because
  $\Strategy$ is convergent, there is some $n$ such that
  $s(n) \subseteq U$. The set of all sequences in $S$ that
  agree with $s$ on the first $n$ coordinates is an open
  neighborhood $\hU$ of $s$ with $f(\hU) \subseteq
  U$.

  To check that $f$ is an open mapping, let $U$ be a basic open set in
  $S$. Without loss of generality, $U$ is determined by a
  finite initial segment $\tau = \seq{U_1, V_1, \ldots, U_n,
  V_n}$ of open
  sets. Now for any $x \in V_n$, there is an extension of
  $\tau$ to an element $s_x$ with $f(s_x) = x$, which is
  obtained by simply playing $x$ and a neighborhood basis of
  $x$ in the Choquet game. Thus $f(U) = V_n$.
  
  Finally, we verify that the weight of the metric space can
  be taken to be the same as the weight of~$X$.  If $X$ is
  finite, then because $X$ is $T_1$ it is discrete, and the
  result is trivial.  Now suppose that $X$ is infinite and
  $\Basis$ is a basis for $X$ of minimal cardinality;
  $\Basis$ will be infinite as well.  Now the space~$S$, if
  constructed from $\Basis$ as above, has a basis whose
  cardinality is no larger than the cardinality of the set
  of finite subsets of $\Basis^2$. This will be exactly the
  cardinality of $\Basis$.
\end{proof}

Our next example shows that the completeness assumption in
Theorem~\ref{T:appl-oimage} cannot be removed altogether. 
We will rely on a characterization of the first-countable
spaces due to Ponomarev.

\begin{theorem}[Ponomarev~\cite{Ponomarev60}]
  \label{T:ponomarev} A $T_0$ space is first-countable if and only if
 it is the open continuous image of a metric space.
\end{theorem}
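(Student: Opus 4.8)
This is a classical theorem, and in the paper it would most naturally be quoted rather than reproved; nevertheless, here is how I would sketch a proof. The backward direction is the routine one. If $f\colon M\to X$ is an open continuous surjection and $M$ is metrizable, fix $x\in X$, pick $z\in f^{-1}(x)$, and let $\set{B_n}_{n<\omega}$ be a countable neighborhood basis at $z$; then openness and continuity of $f$ make $\set{f(B_n)}_{n<\omega}$ a countable neighborhood basis at $x$, so $X$ is first countable. (This direction uses neither completeness of $M$ nor the $T_0$ hypothesis, and in fact ``metrizable'' could be weakened to ``first countable''.)

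For the forward direction I would run Ponomarev's standard construction. Let $X$ be first countable and $T_0$, fix a basis $\Basis=\set{G_\alpha:\alpha<\kappa}$ for $X$ with $\kappa$ the weight of $X$, and let $Z\subseteq\kappa^\omega$ consist of all sequences $s$ for which $G_{s(0)}\supseteq G_{s(1)}\supseteq\cdots$ and $\set{G_{s(n)}:n<\omega}$ is a neighborhood basis at some point of $\bigcap_{n<\omega}G_{s(n)}$. Topologize $Z$ as a subspace of the product $\kappa^\omega$ with $\kappa$ discrete, so that $Z$ is metrizable via the metric $d(s,t)=2^{-n}$, where $n$ is the first coordinate at which $s$ and $t$ differ. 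Because $X$ is $T_0$, the point witnessing the defining property of a member of $Z$ is unique, so there is a well-defined map $f\colon Z\to X$ sending each $s$ to that point. I would then verify three things in turn. First, $f$ is onto: given $x\in X$ with countable neighborhood basis $\set{N_k}_{k<\omega}$, recursively choose $\alpha_k$ with $x\in G_{\alpha_0}\subseteq N_0$ and $x\in G_{\alpha_{k+1}}\subseteq G_{\alpha_k}\cap N_{k+1}$, getting $s=\seq{\alpha_k}_{k<\omega}\in Z$ with $f(s)=x$. Second, $f$ is continuous: if $f(s)=x\in U$ with $U$ open, then $G_{s(n)}\subseteq U$ for some $n$, and the cylinder of sequences in $Z$ agreeing with $s$ on the first $n+1$ coordinates is an open neighborhood of $s$ that $f$ maps into $G_{s(n)}\subseteq U$. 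Third, $f$ is open, which I would get by showing that $f$ sends the cylinder determined by an initial segment $\seq{s(0),\dots,s(n)}$ exactly onto the basic set $G_{s(n)}$.

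The openness step is the one requiring genuine work, and I expect it to be the main obstacle. One inclusion is immediate, since every $t$ in that cylinder has $f(t)\in G_{t(n)}=G_{s(n)}$. For the reverse inclusion, given $y\in G_{s(n)}$ one must build a member of $Z$ that extends the prescribed head $\seq{s(0),\dots,s(n)}$ and maps to $y$; the idea is to splice that fixed head onto a freshly constructed $\supseteq$-decreasing local base at $y$ lying entirely below $G_{s(n)}$, which is possible precisely because $X$ is first countable, so a cofinal descending family of neighborhoods of $y$ fits into a single $\omega$-sequence. One then checks that the spliced sequence is still $\supseteq$-decreasing (using $y\in G_{s(n)}\subseteq G_{s(n-1)}\subseteq\cdots$, so that $y$ lies in every term) and is a neighborhood basis at the point $y\in\bigcap_n G_{t(n)}$. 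Since the cylinders form a basis for $Z$ and images respect unions, openness of $f$ follows, completing the proof. I would also note, paralleling the ``moreover'' in Theorem~\ref{T:appl-oimage}, that $Z$ has weight at most $\kappa=w(X)$ (its cylinders are indexed by finite sequences from $\Basis$), so the metric space may be taken to have the same weight as $X$ when $X$ is infinite.
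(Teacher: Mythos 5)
The paper states this result as a citation to Ponomarev and gives no proof of its own, so there is nothing internal to compare against; your sketch is the standard Ponomarev construction and it is correct. It is also structurally the same device the paper itself uses in the converse direction of Theorem~\ref{T:appl-oimage} (there the metric space is a closed set of descending sequences of basic open sets drawn from a strategy, rather than from arbitrary neighborhood bases), so your argument fits the paper's toolkit exactly. The only step you assert without argument is the one place $T_0$ is used, namely that the point witnessing membership in $Z$ is unique; this is a one-line check (if $\set{G_{s(n)}}$ were a neighborhood basis at two distinct points $x,y\in\bigcap_n G_{s(n)}$, a $T_0$-separating open set around either point would have to contain some $G_{s(n)}$ and hence both points), and with that noted the proof is complete.
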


The space in this example has been discussed by 
Todor{\v{c}}evi{\'c}~\cite{Todor1984}. 

\begin{example}\label{E:noconvergent}
  There is a first-countable Hausdorff space $X$ such that
  there is no convergent strategy for \POINT\ in~$\SCG(X)$.
  Moreover, by Theorem~\ref{T:ponomarev}, this space is the
  open continuous image of a metric space.
\end{example}
\newcommand{\dom}{\operatorname{dom}}

\begin{proof}
  The example relies on several concepts from set theory
  that we define briefly here; these are not used outside of
  the present proof. The set of countable ordinals is
  denoted $\omega_1$. A subset of $\omega_1$ is
  \textit{unbounded} if it has no upper bound less than
  $\omega_1$, and \textit{closed} when it is closed in the
  order topology.  A set is \textit{club} if it is closed and
  unbounded, and \textit{stationary} if it has nonempty
  intersection with every club set.  Every club set is
  stationary, and it is well known that there are stationary
  sets that do not contain any club set.  A function $f$
  from an initial segment of $\omega_1$ to $\omega_1$ is
  \textit{continuous} if it is continuous in the order topology.

  Fix a set $A \subseteq \omega_1$ that is stationary
  and does not contain any club set; thus $\omega_1 \setminus A$
  is unbounded in~$\omega_1$.  We construct our example $X =
  X_A$ as the set of all maximal paths through a certain
  tree~$T$. For each ordinal $\alpha < \omega_1$, let
  $T_\alpha$ consist of all continuous, increasing functions
  from the ordinals less than or equal to $\alpha$ to $\omega_1
  \setminus A$. Then let $T = \bigcup_{\alpha < \omega_1} T_\alpha$.
  We assign $X$ the topology in which each element~$\tau$
  of~$T$ determines a basic open set $N_\tau$, consisting of
  those elements of $X$ that extend~$\tau$. 
  
  Any maximal path $f$ through $T$ can be naturally
  identified with a continuous increasing function from an
  initial segment $\dom(f) \subseteq \omega_1$ to $\omega_1
  \setminus A$.  Moreover, $o(f) = \sup \set{f(\alpha) :
    \alpha \in \dom(f)}$ will be an element of $A$ or will
  be~$\omega_1$. For, if $o(f) < \omega_1$ is not in $A$,
  then we could extend $f$ to a larger continuous increasing
  function, because we have assumed that $\omega_1 \setminus
  A$ is unbounded.
  
  For any $f \in X$, if $o(f) = \omega_1$, then $\dom(f) =
  \omega_1$ and $C = \set{ f(\alpha) : \alpha < \omega_1}$
  will be a club set.  In this case, because $A$ is a
  stationary set, $C \cap A$ is nonempty, contradicting the
  definition of $f$.  Thus, each maximal path $f$ through
  $T$ has a bounded range, and thus a bounded domain, so
  there is some sequence $\seq{\tau(i)}_{i < \omega}$ with
  $f = \bigcap N_{\tau(i)}$. This means that $X$ is first-countable.
  
  Now suppose that $\Strategy$ is a convergent strategy for
  \POINT\ in $\SCG(X)$; we will show that $A$ contains a
  club set, namely the set 
\[
C = \{ o(f) : f \in T \text{ and } o(f) \in A\}.
\]
  As this set is clearly a subset of $A$, we only need
  to prove it is closed and unbounded.
  Note that, because a descending sequence of nonempty open
  sets of $X$ cannot have an empty intersection, $\Strategy$ will
  necessarily be a winning strategy. Moreover, because $X$
  is Hausdorff, any play of $\SCG(X_A)$ that follows
  $\Strategy$ will have a single point in the intersection
  of the open sets played.
  
  To see that $C$ is unbounded, note that for any $\beta <
  \omega_1$, the set of ordinals between $\beta$ and
  $\omega_1$ that are of the form $o(f)$ for some $f \in
  T$ will be a club set, which will have nonempty
  intersection with the stationary set~$A$.
  
  Let $\seq{\alpha_i}_{i < \omega}$ be any increasing
  sequence of elements of $C$. To complete the proof that
  $C$ is a club set, we must show that $\alpha = \sup \alpha_i$ 
  is in~$C$.  We define a play of $\SCG(X)$ that follows
  $\Strategy$. At stage $0$, find some element $f_0 \in X$
  with $o(f_0) = \alpha_0$; this is possible because of the
  definition of $C$. Make \EMPTY\ play $f_0$ and any open
  neighborhood of $f_0$, so that $\Strategy$ returns $V_0$.
  We may assume that $V_0$ is a basic open neighborhood,
  determined by a continuous increasing function $g_0$ from
  an initial segment of $\omega_1$ to $\omega_1 \setminus
  A$. Now, because $f_0 \in V_0$, there is no ordinal in the
  range of $g_0$ that is larger than $\alpha_0$. Thus,
  because $\dom(g_0)$ has a largest element and $\alpha_0$
  is a limit ordinal, we can extend $g_0$ to some $f_1 \in
  V_0$ such that $o(f_1) = \alpha_1$, and then find a basic
  neighborhood $U_1$ of $f_1$ such that every $f \in U_1$
  has $o(f) > \alpha_0$. Let $V_1$ be the response of
  $\Strategy$ when \EMPTY\ now plays $\seq{f_1, U_1}$; then
  $V_1$ is determined by some function $g_1 \in T$.
  Continuing inductively, we generate a play
  $\seq{f_i,U_i}_{i < \omega}$ of $\SCG(X)$ following
  $\Strategy$, and a corresponding sequence $\seq{g_i}_{i <
    \omega}$.
  
  Because $\Strategy$ is a convergent winning strategy,
  there is a unique point $f \in \bigcap_i U_i$. Moreover,
   $f = \bigcup_i g_i$, because otherwise there would be 
  more than one point in $\bigcap_i U_i$.   Now we have \
  $\alpha = \sup \alpha_i = o(f)$, and we proved above
  that $o(f)$ must be in~$A$. By the definition of $C$, this
  implies that $\alpha \in C$, which is what we wanted to prove. Thus,
  if \POINT\ has a convergent strategy in $\SCG(X)$,
   then $A$ contains the club set~$C$. This contradicts
  the assumption that $A$ is stationary but does not contain
  a club set; thus there is no convergent strategy for
  \POINT\ in~$\SCG(X)$.
\end{proof}

Before we prove the remaining theorems from the
introduction, we require two propositions about spaces with
specific kinds of bases.

\begin{proposition}\label{P:bco->convergent}
  If a $T_1$ space $X$ has a basis of countable order, then
  \POINT\ has a stationary convergent strategy in~$\SCG(X)$.
  \textup{(}This strategy may not be a winning
  strategy.\textup{)}
\end{proposition}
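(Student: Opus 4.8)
The plan is to build a stationary convergent strategy for \POINT\ directly from a basis $\Basis$ of countable order, using Proposition~\ref{P:convergent-stationary} to reduce the burden: it suffices to produce \emph{some} convergent strategy, not necessarily stationary, and then invoke that proposition (via the monotone property of ``convergent'' plays) to obtain a stationary one. So I would first concentrate on constructing a (not necessarily stationary) convergent strategy. By Proposition~\ref{P:changeofbasis}, I may freely restrict both players to $\Basis$; so I want a strategy in $\SCG(X,\Basis)$ each of whose plays is convergent.

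The core construction: given a partial play $\seq{x_i,U_i,V_i}_{i<n}$ with $U_i, V_i \in \Basis$, I want \POINT's next response $V_n$ to be a basic neighborhood of $x_n$ that is $\subseteq U_n$ and that is chosen to force the sequence $\seq{V_i}$ to become a neighborhood basis at any point of $\bigcap_i V_i$. The key idea is to use the descending chain condition on $\Basis_x[{\nsubseteq U}]$ that defines countable order: if $x \in \bigcap_i V_i$ and $\seq{V_i}_{i<\omega}$ is an infinite strictly descending chain in $\Basis_x$ (which I can arrange by always choosing $V_n \subsetneq V_{n-1}$ when possible, e.g.\ by using the $T_1$ property to shrink), then by the definition of ``countable order'' either this chain is already a neighborhood basis at $x$, or else it fails to be one only if it is \emph{not} cofinal below some $U \in \Basis_x$ — but ``countable order'' says precisely that every infinite descending chain in $\Basis_x$ is a neighborhood basis at $x$. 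So the only thing I need to ensure is that the chain $\seq{V_i}$ is genuinely infinite-descending at each $x \in \bigcap_i V_i$: if at some stage every basic neighborhood of $x_n$ inside $U_n$ equals $U_n$, then $U_n$ is a minimal basic neighborhood and I simply play $V_n = U_n = \{x_n\}$ (using $T_1$, a minimal basic open set containing $x_n$ and contained in an arbitrarily small basic set forces $\{x_n\}$ open), at which point the intersection is the singleton $\{x_n\}$ and $\{V_i\}$ is trivially a neighborhood basis there. Otherwise I can always strictly shrink, so the chain below any non-isolated point of the intersection is infinite descending, hence a neighborhood basis there.

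Concretely, I would define the strategy as follows: at stage $n$, \POINT\ responds to $\seq{x_n,U_n}$ (with $U_n \in \Basis$) by picking any $V_n \in \Basis$ with $x_n \in V_n \subsetneq U_n$ if such exists, and otherwise $V_n = U_n$. I then verify convergence: fix a play $\seq{x_i,U_i,V_i}_{i<\omega}$ following this strategy and a point $x \in \bigcap_i V_i$. If for some $n$ we have $V_n = U_n$ with no proper basic subset containing $x_n$, then $V_n$ is a minimal element of $\Basis_{x_n}$; since $\Basis$ is a basis and $X$ is $T_1$, minimality forces $V_n = \{x_n\}$, so $\bigcap_i V_i = \{x_n\}$ and $\{V_i\}$ contains $\{x_n\}$, hence is a neighborhood basis at $x_n$. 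Otherwise the $V_i$ form a strictly descending sequence eventually, so $\seq{V_i : i \geq N}$ is an infinite descending chain in $\Basis_x$ for each large $N$; since $\Basis$ has countable order, $\seq{V_i}_{i<\omega}$ is a neighborhood basis at $x$. This shows the strategy is convergent. Applying Proposition~\ref{P:convergent-stationary} then yields a stationary convergent strategy, completing the proof.

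The main obstacle I anticipate is handling the ``minimal basic neighborhood'' case cleanly — making sure that when \POINT\ cannot strictly shrink, the $T_1$ hypothesis really does collapse things to a singleton, and that this dovetails with the ``descending chain in $\Basis_x[{\nsubseteq U}]$'' phrasing in the definition of countable order (the definition talks about chains \emph{not} below a fixed $U$, so I need to be slightly careful translating ``infinite descending chain in $\Basis_x$ is a neighborhood basis'' — but the parenthetical gloss in the definition, ``every infinite descending chain in $\Basis_x$ is a neighborhood basis at $x$,'' is exactly what I use, so this should go through). A secondary subtlety is that the construction above is already essentially stationary (it depends only on $\seq{x_n,U_n}$), so in fact one could skip Proposition~\ref{P:convergent-stationary} entirely; but routing through it keeps the argument robust in case repeated moves by \EMPTY\ cause bookkeeping issues, and costs nothing.
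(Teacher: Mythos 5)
Your core construction --- respond to $\seq{x,U}$ with any basic $V$ satisfying $x \in V \subsetneq U$ when one exists, and $V = U$ (which by $T_1$ forces $U = \set{x}$) otherwise, then split on whether the resulting chain of open sets is eventually constant at a minimal neighborhood or is an infinite strictly descending chain handled by countable order --- is exactly the paper's proof. The detour through Proposition~\ref{P:convergent-stationary} is unnecessary, as you note yourself, since the strategy is already stationary by construction.
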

\begin{proof}
  Suppose $\Basis$ is a basis of countable order for the
  space~$X$. Define a stationary strategy $\Strategy$ by
  letting $V = \Strategy(x,U)$ be any element of $\Basis$
  such that $x \in V \subsetneq U$, unless that is
  impossible, in which case $U = \Strategy(x,U)$. The second
  case can only occur if $U= \set{x}$.
  
  Suppose that $\seq{x_i, U_i}_{i < \omega}$ is a play of
  $\SCG(X)$ following $\Strategy$. Suppose $x \in \bigcap_i
  U_i$. If the set $\mathcal{U} = \{U_i : i < \omega\}$ is
  infinite, then $\mathcal{U}$ is a neighborhood basis for
  $x$ because $\Basis$ is of countable order. Otherwise,
  $\mathcal{U}$ is finite, in which case there is some $U
  \in \mathcal{U}$ which is a minimal open neighborhood for
  $x$; this also means that $\mathcal{U}$ is a neighborhood
  basis for~$x$.
\end{proof}

The proof of the following proposition is similar to the
proof of Choquet's theorem presented by
Kechris~\cite[sec.~8.E]{Kechris-CDST}

\begin{proposition}\label{P:uniform=metacompact+convergent}
  A $T_1$ space $X$ has a uniform basis if and only if $X$ is
  metacompact and \POINT\ has a stationary convergent
  strategy in~$\SCG(X)$. \textup{(}This strategy may not be
  a winning strategy.\textup{)}
\end{proposition}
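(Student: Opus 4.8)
The plan is to prove the two implications separately; the forward one will be short. If $X$ has a uniform basis, then $X$ is metacompact by Theorem~\ref{P:uniform->metacompact}, and since every uniform basis is of countable order, Proposition~\ref{P:bco->convergent} already supplies a stationary convergent strategy for \POINT\ in $\SCG(X)$, so this direction requires no new argument. All the substance lies in the converse, which I would prove as follows.

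Assume $X$ is $T_1$ and metacompact and that $\Strategy$ is a stationary convergent strategy for \POINT. By Theorem~\ref{P:uniform->metacompact} it is enough to show $X$ is developable, so the goal is to construct a development $\seq{\mathcal{G}_n}_{n<\omega}$: a sequence of open covers such that for every $x$ and every open $W\ni x$ there is an $n$ with $\bigcup\set{G\in\mathcal{G}_n : x\in G}\subseteq W$. I would build the $\mathcal{G}_n$ by recursion, keeping each one point-finite --- this is the one place metacompactness is used --- and attaching to every $G\in\mathcal{G}_n$ a legal partial play $p(G)$ of $\SCG(X)$ of length $n+1$ that follows $\Strategy$ and whose final open set $V(G)$ (the last move by \POINT) satisfies $G\subseteq V(G)$; I would also attach refinement maps $r_n\colon\mathcal{G}_{n+1}\to\mathcal{G}_n$ with $G\subseteq r_n(G)$ and with $p(r_n(G))$ an initial segment of $p(G)$. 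For the base case, $\set{\Strategy(y,X):y\in X}$ is an open cover of $X$, so I would take $\mathcal{G}_0$ to be a point-finite open refinement of it, pick for each $G\in\mathcal{G}_0$ some $y_G$ with $G\subseteq\Strategy(y_G,X)$, and set $p(G)=\seq{y_G,X,\Strategy(y_G,X)}$. For the successor step, I would observe that $\set{\Strategy(y,V(G))\cap G : G\in\mathcal{G}_n,\ y\in V(G)}$ is an open cover of $X$ refining $\mathcal{G}_n$ --- given $x$, choose $G\in\mathcal{G}_n$ with $x\in G\subseteq V(G)$ and take $y=x$ --- let $\mathcal{G}_{n+1}$ be a point-finite open refinement of it, and for each $H\in\mathcal{G}_{n+1}$ fix a witnessing pair $(G,y)$ with $H\subseteq\Strategy(y,V(G))\cap G$, put $r_n(H)=G$, and let $p(H)$ prolong $p(G)$ by the round in which \EMPTY\ plays $\seq{y,V(G)}$ and \POINT\ answers $\Strategy(y,V(G))$. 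This last move genuinely continues $p(G)$ as a play following $\Strategy$ precisely because $\Strategy$ is stationary, and $H\subseteq\Strategy(y,V(G))=V(H)$ as required.

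To verify that $\seq{\mathcal{G}_n}$ is a development, I would argue by contradiction: fix $x$ and an open $W\ni x$ and suppose that for each $n$ there is $G_n\in\mathcal{G}_n$ with $x\in G_n$ and $G_n\not\subseteq W$. The pairs $(n,G)$ with $G\in\mathcal{G}_n$, $x\in G$, and $G\not\subseteq W$, with $(n,r_n(H))$ declared the parent of $(n+1,H)$, form a tree whose levels are all nonempty and finite --- finite because $\mathcal{G}_n$ is point-finite at $x$, and closed under parents because $G\subseteq r_n(G)$ forces $x\in r_n(G)$ and $r_n(G)\not\subseteq W$. By König's lemma there is an infinite branch $\seq{G_n}_{n<\omega}$ with $r_n(G_{n+1})=G_n$, so the plays $p(G_n)$, being nested initial segments each following $\Strategy$, amalgamate into a single infinite play $\seq{x_i,U_i,V_i}_{i<\omega}$ following $\Strategy$ with $V_n=V(G_n)$. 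Since $x\in G_n\subseteq V_n$ for every $n$, we get $x\in\bigcap_n V_n$, so convergence of $\Strategy$ makes $\set{V_n : n<\omega}$ a neighborhood basis at $x$; hence $V_n\subseteq W$, and therefore $G_n\subseteq W$, for some $n$, contradicting the choice of $G_n$. Thus $\seq{\mathcal{G}_n}$ is a development, $X$ is developable and metacompact, and Theorem~\ref{P:uniform->metacompact} then supplies a uniform basis.

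The step I expect to be delicate is the recursion: arranging that each $\mathcal{G}_{n+1}$ is simultaneously point-finite, a refinement of $\mathcal{G}_n$, and equipped with plays $p(H)$ that genuinely extend $p(r_n(H))$. This last condition is what forces the superset $r_n(H)\in\mathcal{G}_n$ and the extension point $y$ to be chosen in tandem, which is exactly why the stage-$(n+1)$ cover must be formed from the sets $\Strategy(y,V(G))\cap G$ rather than from the sets $\Strategy(y,V(G))$ alone. Everything else should be routine: metacompactness produces the point-finite refinements, stationarity lets a play be prolonged by one move without leaving $\Strategy$, and convergence of $\Strategy$ --- together with the fact that the fixed point $x$ gets trapped in $\bigcap_n V_n$ --- closes the argument.
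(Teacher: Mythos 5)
Your forward direction is exactly the paper's: metacompactness from Theorem~\ref{P:uniform->metacompact}, and the stationary convergent strategy from the fact that a uniform basis is of countable order together with Proposition~\ref{P:bco->convergent}. For the converse you take a correct but genuinely different route to the finish line. The paper builds the uniform basis directly: it sets $\Basis_0=\set{X}$, lets $\Basis_{n+1}$ be a point-finite open refinement of $\set{\Strategy(x,U): x\in U\in\Basis_n}$, and then verifies uniformity of $\Basis=\bigcup_n\Basis_n$ head-on --- given an infinite $\mathcal{A}\subseteq\Basis$ with $x\in\bigcap\mathcal{A}$, it passes to the upward closure, forms the finitely branching tree of chains $U_i\subseteq\Strategy(w,U_{i-1})$, and uses K{\"o}nig's Lemma plus convergence of $\Strategy$ to conclude that $\mathcal{A}$ is a neighborhood basis at~$x$. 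You instead prove that $X$ is developable and let Aleksandrov's theorem (Theorem~\ref{P:uniform->metacompact} again) manufacture the uniform basis. The underlying engine is identical --- iterated point-finite refinements of the strategy's responses, followed by a K{\"o}nig's Lemma argument that traps the fixed point $x$ in $\bigcap_n V_n$ of an infinite play following $\Strategy$ --- but the bookkeeping differs: you carry the partial plays $p(G)$ and refinement maps $r_n$ forward through the recursion, whereas the paper reconstructs the plays a posteriori from its tree of chains. Your version costs an extra appeal to Aleksandrov's equivalence (already used by the paper in the forward direction, so nothing new is imported) and buys a slightly simpler endgame, since the development condition is checked only against a single point $x$ and a single open set $W$ rather than against an arbitrary infinite subfamily of the constructed basis. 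The step you flagged as delicate is handled correctly: refining $\set{\Strategy(y,V(G))\cap G}$ rather than $\set{\Strategy(y,V(G))}$ is exactly what lets you choose the witness $(G,y)$ so that $p(H)$ genuinely prolongs $p(r_n(H))$ as a play following the stationary strategy, and the rest of your verification (parent-closure of the bad tree, finiteness of its levels from point-finiteness at $x$, and $G_n\subseteq V(G_n)\subseteq W$ for some $n$ by convergence) is sound.
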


\begin{proof}
  For the forward implication, suppose $\Basis$ is a uniform
  basis for~$X$. We know from
  Theorem~\ref{P:uniform->metacompact} that $X$ is
  metacompact. Also, $\Basis$ is of countable order, which
  implies that \POINT\ has a stationary convergent strategy
  in $\SCG(X)$ by Proposition~\ref{P:bco->convergent}.
  
  For the reverse implication, suppose that $X$ is
  metacompact and that $\Strategy$ is a stationary
  convergent strategy for \POINT\ in~$\SCG(X)$.
  
  We use $\Strategy$ to inductively construct a uniform
  basis $\Basis$ for $X$ in $\omega$ stages. At stage $0$, set
  $\Basis_0 = \set{X}$, which is a point-finite open cover
  of~$X$. At stage $i+1$, having defined the point-finite
  open cover $\Basis_i$, pick $\Basis_{i+1}$ to be a
  point-finite open refinement of the family
  $\set{\Strategy(x,U) : x \in U \in \Basis_i}$.  We claim that
  $\Basis = \bigcup_{i<\omega} \Basis_i$ is a uniform basis for~$X$.

  To see this, suppose that $\mathcal{A} \subseteq \Basis$ is
  infinite and that $x \in \bigcap \mathcal{A}$. Let
  $\widehat{\mathcal{A}}$ be the upward closure of
  $\mathcal{A}$ in~$\Basis$. Now $\widehat{\mathcal{A}}$ is
  a basis at $x$ only if $\mathcal{A}$ is a basis at~$x$.
  Since $\Basis_i$ is point-finite and $x$ belongs to every
  element of $\widehat{\mathcal{A}}$, the set
  ${\widehat{\mathcal{A}}} \cap \Basis_i$ is finite for 
  each~$i < \omega$.

  Consider the finitely branching tree $\mathcal{T}$ of finite
  sequences $\seq{U_i}_{i \leq n}$ such that $U_i \in
  \widehat{\mathcal{A}} \cap \Basis_i$ and, if $i \geq 1$, then $U_i
  \subseteq \Strategy(w,U_{i-1})$ for some $w \in U_i$.  The
  definition of $\seq{\Basis_i}_{i<\omega}$ guarantees that every
  element of $\widehat{\mathcal{A}}$ belongs to a sequence in
  $\mathcal{T}$. Since $\widehat{\mathcal{A}}$ is infinite it follows
  that $\mathcal{T}$ is also infinite.

  By K{\"o}nig's Lemma, $\mathcal{T}$ has an infinite branch
  $\seq{U_i}_{i<\omega}$. By definition of $\mathcal{T}$, we can pick
  a sequence of points $\seq{w_i}_{i<\omega}$ such that
  $\seq{w_i,U_i}_{i<\omega}$ is a play of the game $\SCG(X)$
  following~$\Strategy$. Since $\Strategy$ is convergent,
  it follows that $\set{U_i}_{i<\omega} \subseteq
  \widehat{\mathcal{A}}$ is a neighborhood basis at $x$, hence
  $\mathcal{A}$ is also a neighborhood basis at~$x$.
\end{proof}

The next theorem parallels Theorem~\ref{T:appl-oimage}; it
characterizes the $T_1$ metacompact spaces for which \POINT\ 
has a convergent strategy in the Choquet game.

\begin{theorem}
  \label{T:appl-okimage}
  Let $X$ be a $T_1$ space. Then $X$ is the open continuous compact
  image of a metric space if and only if $X$ is metacompact
  and \POINT\ has a convergent strategy
  in~$\SCG(X)$. Moreover, if these conditions hold, then
  \POINT\ has a stationary convergent strategy
  in~$\SCG(X)$. \textup{(}This strategy may not be a winning
  strategy.\textup{)}
\end{theorem}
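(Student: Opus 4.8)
The plan is to prove both implications. For the reverse direction, suppose $X$ is $T_1$, metacompact, and \POINT\ has a convergent strategy in $\SCG(X)$. By Proposition~\ref{P:convergent-stationary}, \POINT\ then has a \emph{stationary} convergent strategy, so Proposition~\ref{P:uniform=metacompact+convergent} applies and yields a uniform basis for $X$. By Theorem~\ref{T:arhangelskii}, every $T_1$ space with a uniform basis is the open continuous compact image of a metric space, which is exactly what we want. The ``moreover'' clause is just the observation that we already invoked Proposition~\ref{P:convergent-stationary} to produce the stationary convergent strategy.

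For the forward direction, suppose $f\colon Z \to X$ is an open continuous compact surjection with $Z$ metric. First, $X$ is metacompact: this should follow from a standard argument using openness of $f$, compactness of the fibers, and paracompactness (hence metacompactness) of the metric space $Z$ — given an open cover $\mathcal U$ of $X$, pull it back, take a point-finite (even locally finite) open refinement of $f^{-1}\mathcal U$ in $Z$, then for each $x\in X$ cover the compact fiber $f^{-1}(\{x\})$ by finitely many members of that refinement, push forward their union by $f$ (using openness to get an open set), and shrink appropriately; the compactness of fibers keeps the resulting family point-finite over $X$. Second, \POINT\ has a convergent strategy in $\SCG(X)$: here I would adapt the back-and-forth construction from Proposition~\ref{P:closed-open} (as was done in the proof of Theorem~\ref{T:appl-oimage}), starting from the canonical stationary convergent strategy $\Strategy_Z$ for \POINT\ in the complete-metric Choquet game on $Z$ — or rather, since $Z$ need not be complete, from \emph{any} convergent trace strategy $\Strategy_Z$ for \POINT\ on $Z$ (one exists because a metric space has a basis of countable order, so Proposition~\ref{P:bco->convergent} gives a stationary convergent strategy, and Theorem~\ref{T:trace-strategy} upgrades it to a trace strategy if needed). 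Run the transfer to get $\Strategy_X$ on $X$; to see $\Strategy_X$ is convergent, take a play following $\Strategy_X$ with lift $\seq{\hx_k,\hU_k,\hV_k}_{k<\omega}$ on $Z$. Convergence of $\Strategy_Z$ forces $\{\hV_k\}$ to be a neighborhood basis at any point of $\bigcap_k \hV_k$. Now suppose $z\in\bigcap_k V_k$ and $W$ is an open neighborhood of $z$ in $X$; pick any $\hz$ in the (nonempty, by surjectivity onto $\bigcap V_k$) preimage lying in $\bigcap_k \hU_k$ — this is where compactness of the fiber $f^{-1}(\{z\})$ does the work, guaranteeing the nested sequence $\hU_k\cap f^{-1}(\{z\})$ of nonempty closed-in-the-fiber sets has a common point $\hz$ — then $f^{-1}(W)$ is a neighborhood of $\hz$, so some $\hV_k\subseteq f^{-1}(W)$, whence $z\in V_k = f(\hV_k)\subseteq W$.

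The main obstacle will be the forward direction, and specifically the two places where compactness of the fibers is essential: (i) in proving metacompactness of $X$, where without compact fibers the push-forward of a point-finite family need not be point-finite; and (ii) in the convergence argument, where we need a single point $\hz$ in $\bigcap_k \hU_k$ mapping to the given $z\in\bigcap_k V_k$ — compactness of $f^{-1}(\{z\})$ supplies it via the finite intersection property, since each $\hU_k\cap f^{-1}(\{z\})$ is nonempty (as $z\in V_k\subseteq f(\hU_k)$ after the construction is arranged so that $z\in\bigcap V_k$ lies in every $f(\hU_k)$) and closed in the compact fiber. Once that point $\hz$ is in hand the rest of the convergence check is routine, exactly as in the proof of Theorem~\ref{T:appl-oimage}, but now we do not need $Z$ complete because we are not claiming $\Strategy_X$ is winning — only convergent. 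I would take care to state the fiber-compactness FIP argument cleanly, as it is the real content of both halves of the forward implication.
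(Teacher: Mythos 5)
Your reverse direction coincides with the paper's proof, which is nothing more than a two-step chain of equivalences: by Theorem~\ref{T:arhangelskii}, a $T_1$ space is the open continuous compact image of a metric space if and only if it has a uniform basis, and by Proposition~\ref{P:uniform=metacompact+convergent} it has a uniform basis if and only if it is metacompact and \POINT\ has a stationary convergent strategy. Because both cited results are equivalences, this chain disposes of the forward implication as well, so the paper never constructs anything in that direction. You instead re-derive the forward implication by hand: your metacompactness argument (pull back the cover, refine locally finitely in the paracompact metric space, push forward, and use fiber compactness to preserve point-finiteness) is essentially the paper's Proposition~\ref{P:paracompact->metacompact}, which the paper only states later for Theorem~\ref{T:appl-okcimage}; and your convergent strategy is obtained by running the back-and-forth transfer of Proposition~\ref{P:closed-open} starting from a convergent strategy on $Z$ (which exists, as you say, since a metric space is developable and metacompact, hence has a uniform basis, hence a basis of countable order, and Proposition~\ref{P:bco->convergent} applies). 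Your route is more self-contained but redoes work that the classical Theorem~\ref{T:arhangelskii} already packages.

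There is one step in your forward direction that does not work as written. The sets $\hU_k \cap f^{-1}(\set{z})$ are relatively \emph{open} in the fiber, not closed, so the finite intersection property cannot be applied to them directly: a nested sequence of nonempty open subsets of a compact set can have empty intersection. The repair is standard but should be made explicit. Choose $\Strategy_Z$ on the metric space $Z$ so that $\overline{\Strategy_Z(\hx,\hU)} \subseteq \hU$ (respond with a ball whose closure lies inside the given set); then $\overline{\hV_{k+1}} \subseteq \hU_{k+1} \subseteq \hV_k$, and the sets $\overline{\hV_k \cap f^{-1}(\set{z})}$ form a nested sequence of nonempty closed subsets of the compact fiber (nonempty because $z \in V_k = f(\hV_k)$). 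Compactness then yields a point $\hz \in \bigcap_k \overline{\hV_{k+1}} \subseteq \bigcap_k \hV_k$, and from there your argument that every open $W \ni z$ contains some $V_k$ goes through exactly as you describe.
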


\begin{proof}
  Let $X$ be a $T_1$ space. By
  Theorem~\ref{T:arhangelskii}, $X$ is the open continuous compact image
  of a metrizable space if and only if $X$ has a uniform
  basis.   But, by
  Proposition~\ref{P:uniform=metacompact+convergent}, $X$
  has a uniform basis if and only if $X$ is metacompact and
  \POINT\ has a stationary convergent strategy in~$\SCG(X)$.
\end{proof}

Our next theorem will require an additional result on metacompactness.

\begin{proposition}\label{P:paracompact->metacompact}
  Let $X$ and $Y$ be $T_1$ spaces and let $f$ be an open continuous
  compact mapping of $X$ onto~$Y$. If $X$ is paracompact then $Y$ is
  metacompact.
\end{proposition}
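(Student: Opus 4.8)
The plan is to use the standard pull-back/push-forward argument that converts a paracompactness witness in $X$ into a metacompactness witness in $Y$, using compactness of the fibers at the crucial step. Given an open cover $\mathcal{U}$ of $Y$, I would first form the open cover $\set{f^{-1}(U) : U \in \mathcal{U}}$ of $X$. Since $X$ is paracompact, this cover admits a locally finite open refinement $\mathcal{V}$. Because $f$ is an open map, every member of the family $\set{f(V) : V \in \mathcal{V}}$ is open in $Y$; this family covers $Y$, since $\mathcal{V}$ covers $X$ and $f$ is surjective; and it refines $\mathcal{U}$, because each $V \in \mathcal{V}$ satisfies $V \subseteq f^{-1}(U)$ for some $U \in \mathcal{U}$, whence $f(V) \subseteq f(f^{-1}(U)) \subseteq U$.

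It remains to show that $\set{f(V) : V \in \mathcal{V}}$ is point-finite, and this is where the compactness of $f^{-1}(\set{y})$ enters. Fix $y \in Y$. For each $x \in f^{-1}(\set{y})$, local finiteness of $\mathcal{V}$ gives an open neighborhood $W_x$ of $x$ meeting only finitely many members of $\mathcal{V}$. Since $f^{-1}(\set{y})$ is compact, finitely many such neighborhoods $W_{x_1}, \dots, W_{x_n}$ cover $f^{-1}(\set{y})$, so their union $W = \bigcup_{k \leq n} W_{x_k}$ is an open set containing the whole fiber that meets only finitely many members of $\mathcal{V}$, say $V_1, \dots, V_m$. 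Now if $y \in f(V)$ for some $V \in \mathcal{V}$, pick $x \in V$ with $f(x) = y$; then $x \in f^{-1}(\set{y}) \subseteq W$, so $x \in V \cap W$ and hence $V \in \set{V_1, \dots, V_m}$. Thus at most $m$ members of $\set{f(V) : V \in \mathcal{V}}$ contain $y$, so this family is a point-finite open refinement of $\mathcal{U}$, and $Y$ is metacompact.

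The argument is largely routine; the only point requiring care is the point-finiteness step, where one must pass from local finiteness in $X$ to point-finiteness in $Y$. The key idea is that compactness of $f^{-1}(\set{y})$ upgrades ``locally finite near each point of the fiber'' to ``finitely many $\mathcal{V}$-members meet a single open set containing the entire fiber,'' after which openness of $f$ (used to produce a preimage $x \in V$ of a given $y \in f(V)$) forces any such $V$ to lie among that finite list. I expect no real obstacle here; note also that the $T_1$ hypothesis is not actually used in this proposition, though it is in force throughout the section.
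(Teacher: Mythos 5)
Your proof is correct and follows essentially the same route as the paper: pull the cover back, take a locally finite refinement in $X$, push forward by the open map, and use compactness of the fiber to convert local finiteness into point-finiteness at each $y$. One tiny quibble: producing a point $x \in V$ with $f(x)=y$ when $y \in f(V)$ is just the definition of the image, not a use of openness of $f$ (openness is only needed to make the pushed-forward sets open).
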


\begin{proof}
  Let $\mathcal{V}_0$ be an open cover of $Y$ and let $\mathcal{U}_0 =
  \set{f^{-1}(V): V \in \mathcal{V}_0}$, which is an open cover of
  $X$. Since $X$ is paracompact, we can find a locally-finite open
  refinement $\mathcal{U}$ of $\mathcal{U}_0$. We claim that
  $\mathcal{V} = \set{f(U) : U \in \mathcal{U}}$ is a point-finite
  open refinement of~$\mathcal{V}_0$.

  The fact that $\mathcal{V}$ is an open refinement of $\mathcal{V}_0$
  is clear. Since $f$ is open and onto, $\mathcal{V}$ is certainly an
  open cover of $Y$. For each $U \in \mathcal{U}$ there is a $V \in
  \mathcal{V}_0$ such that $U \subseteq f^{-1}(V)$. Therefore, $f(U)
  \subseteq V$, which shows that $\mathcal{V}$ is a
  refinement of~$\mathcal{V}_0$.

  It remains to show that $\mathcal{V}$ is point-finite. Fix $y \in
  Y$.  For each $x \in X$, we can find an open neighborhood $W_x$ of
  $x$ that meets only finitely many elements of $\mathcal{U}$.  Because
  $f$ is a compact mapping, $f^{-1}(y)$ is a compact subset of~$X$. So
  we can find $x_1,\dots,x_k \in f^{-1}(y)$ such that $f^{-1}(y)
  \subseteq W_{x_1} \cup \cdots \cup W_{x_k}$. If $U \in \mathcal{U}$
  and $y \in f(U)$ then $U \cap W_{x_i} \neq \varnothing$ for some $i
  \in \set{1,\dots,k}$. By our choice of $W_{x_i}$, there are only
  finitely many such $U \in \mathcal{U}$ for each $i \in
  \set{1,\dots,k}$. Therefore, the set $\set{V \in \mathcal{V}: y \in
    V}$ is finite.
\end{proof}

Our final theorem gives a characterization of the $T_1$
metacompact spaces for which  \POINT\ has a convergent winning
strategy for the Choquet game.

\begin{theorem}\label{T:appl-okcimage}
  Let $X$ be a $T_1$ space. Then $X$ is the open continuous compact
  image of a complete metric space if and only if $X$ is
  metacompact and \POINT\ has a convergent winning strategy
  in~$\SCG(X)$.  
Moreover, the strategy can be taken to be stationary, and the
metric space can be taken to have the same weight as~$X$.
\end{theorem}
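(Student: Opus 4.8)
The plan is to combine the machinery already in place with a construction patterned on the converse half of Theorem~\ref{T:appl-oimage} and on the classical argument behind Theorem~\ref{T:arhangelskii}. For the forward implication, suppose $f\colon Z\to X$ is an open continuous compact surjection from a complete metric space~$Z$. Metrizable spaces are paracompact, so $X$ is metacompact by Proposition~\ref{P:paracompact->metacompact}. Moreover \POINT\ has a convergent winning strategy in $\SCG(Z)$ --- the canonical one, whose open sets have radii tending to~$0$ --- and the forward half of the proof of Theorem~\ref{T:appl-oimage}, which uses only that $f$ is an open continuous surjection from a complete metric space, transfers this to a convergent winning strategy for \POINT\ in $\SCG(X)$; compactness of $f$ plays no role there.

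For the converse, assume $X$ is metacompact and \POINT\ has a convergent winning strategy in $\SCG(X)$; I first upgrade this strategy. By Proposition~\ref{P:convergent-stationary}, \POINT\ has a stationary convergent strategy in $\SCG(X)$, so by Proposition~\ref{P:uniform=metacompact+convergent} the space~$X$ has a uniform basis, hence an open-finite basis. Since ``the play is convergent and winning'' is an invariant property of plays and \POINT\ has a winning strategy in the associated generalized Choquet game, Theorem~\ref{T:open-finite-stationary} supplies a \emph{stationary} convergent winning strategy~$\Strategy$ for \POINT\ in $\SCG(X)$; this already establishes the ``moreover'' clause about stationarity, so it remains to produce the complete metric space.

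Because $X$ has a uniform basis it is developable and metacompact (Theorem~\ref{P:uniform->metacompact}), so fix a development $\seq{\mathcal{D}_n}_{n<\omega}$ in which $\mathcal{D}_0=\set{X}$, each $\mathcal{D}_n$ is a point-finite open cover, $\mathcal{D}_{n+1}$ refines $\mathcal{D}_n$, and --- using metacompactness to first take a point-finite open refinement of $\set{\Strategy(x,U):x\in U\in\mathcal{D}_n}$ --- each member of $\mathcal{D}_{n+1}$ is contained in some $\Strategy(x,U)$ with $x\in U\in\mathcal{D}_n$. I would then take $Z$ to be an appropriate set of ``threads'' $\seq{D_n}_{n<\omega}$, $D_n\in\mathcal{D}_n$, that follow~$\Strategy$, chosen so that: (i) membership in $Z$ is decided by finitely many coordinates at a time, so $Z$ is closed in the completely metrizable product $\prod_n\mathcal{D}_n$ and hence is itself a complete metric space; (ii) each thread, with a choice of witness points, codes a play of $\SCG(X)$ following~$\Strategy$, so $\bigcap_n D_n$ is a single point $f(\seq{D_n})$, as $\Strategy$ is convergent and winning and $X$ is $T_1$; (iii) $f$ is continuous and open, with $f$ of a basic open set of $Z$ determined by $\seq{D_0,\dots,D_n}$ equal to $D_0\cap\dots\cap D_n$, just as in the proof of Theorem~\ref{T:appl-oimage}; (iv) $f$ is surjective; and (v) for each $x\in X$ the fiber $f^{-1}(x)$ is a closed subspace of $\prod_n\set{D\in\mathcal{D}_n:x\in D}$, a product of finite sets, hence compact by Tychonoff's theorem. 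The point of the \emph{winning} strategy is that it lets one replace the natural but non-closed requirement ``$\bigcap_n D_n\neq\varnothing$'' of an Arhangel$'$ski{\u\i}-type construction by the closed requirement that the thread follow~$\Strategy$. For the weight, each point-finite open cover of $X$ has at most $w(X)$ members (any dense subset of size $w(X)$ meets each member), so $w(Z)\le w\bigl(\prod_n\mathcal{D}_n\bigr)\le w(X)$, while $w(X)\le w(Z)$ because $X$ is an open continuous image of $Z$; hence $w(Z)=w(X)$.

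The delicate point --- and the main obstacle --- is to make items (iii) and (iv) cohere with (i): closedness forces the ``follow~$\Strategy$'' condition to be imposed at \emph{every} stage~$n$, whereas surjectivity and openness demand, at each stage, a development member near each point small enough for $\Strategy$ to be followed through it, and point-finiteness of the $\mathcal{D}_n$ pulls against this. Arranging the development in careful coordination with~$\Strategy$ so that all of (i)--(v) hold simultaneously is the technical heart of the proof; it is a completeness-aware refinement of the classical argument behind Theorem~\ref{T:arhangelskii}, and is where metacompactness, developability, and the stationary convergent winning strategy are used in concert.
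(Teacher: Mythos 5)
Your proposal is essentially the paper's own proof: the forward direction via Theorem~\ref{T:appl-oimage} and Proposition~\ref{P:paracompact->metacompact}, the upgrade to a uniform basis via Propositions~\ref{P:convergent-stationary} and~\ref{P:uniform=metacompact+convergent}, the stationary convergent winning strategy via Theorem~\ref{T:open-finite-stationary} (the paper gets convergence from the countable-order property of the uniform basis rather than by feeding ``convergent and winning'' into the theorem as the invariant property, but this is cosmetic), and the space of threads through point-finite covers $\Basis_0=\set{X}$, $\Basis_{n+1}$ a point-finite refinement of $\set{\Strategy(x,U):x\in U\in\Basis_n}$, with compact fibers coming from $\prod_n\set{U\in\Basis_n:x\in U}$. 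The ``delicate point'' you flag is handled in the paper exactly as you anticipate: $S$ is the set of descending threads $\seq{U_n}\in\prod_n\Basis_n$ such that for each $n$ there is some $x\in U_{n+1}$ with $U_{n+1}\subseteq\Strategy(x,U_n)$ --- a coordinatewise condition, hence closed --- and the paper then disposes of continuity, openness, and surjectivity with the single remark that they follow ``using the same technique as the proof of Theorem~\ref{T:appl-oimage}.'' So the coordination between the point-finite refinements and the strategy that you identify as the technical heart is not elaborated further in the paper either; your reconstruction matches it in every substantive respect, including the weight computation.
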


\begin{proof}
%
First, assume that $f \colon Z \to X$ is an open continuous 
compact surjection from a complete metric space $Z$ to
$X$. It follows from 
Theorem~\ref{T:appl-oimage} that \POINT\ has a convergent
winning strategy in $\SCG(X)$, and it follows from
Proposition~\ref{P:paracompact->metacompact} that $X$ is
metacompact. The proof of the converse implication will show
that we can take the strategy to be stationary.

For the converse, assume $X$ is a metacompact $T_1$ space
and \POINT\ has a convergent winning strategy for~$\SCG(X)$.
By Propositions~\ref{P:convergent-stationary}
and~\ref{P:uniform=metacompact+convergent}, there
is a uniform basis $\Basis$ for $X$. Because $\Basis$ is
open-finite, we know that \POINT\ has a stationary winning
strategy $\Strategy$ in $\SCG(X,\Basis)$, by
Theorem~\ref{T:open-finite-stationary}
and~Proposition~\ref{P:changeofbasis}.  Because $\Basis$ is
of countable order, this strategy will also be convergent.
We may assume, as in the proof of
Proposition~\ref{P:uniform=metacompact+convergent}, that
$\Basis = \bigcup_{n < \omega} \Basis_n$, where $\Basis_0 = \{X\}$ and
each $\Basis_{n+1}$ is a point-finite refinement of
$\{\Strategy(x, U) : x \in U \in \Basis_n\}$.

The proof now resembles the second part of the proof of
Theorem~\ref{T:appl-oimage}. The space $\prod_{n<\omega}
\Basis_n$ has a complete metric in which the distance
between two distinct sequences is $2^{-n}$ when $n$ is the index of
the first position where the sequences differ.
Let $S$ be the set of all descending sequences
$\seq{U_n}_{n<\omega} \in \prod_{n<\omega} \Basis_n$ of open
subsets of $X$ such that for every $n < \omega$, there is some 
$x \in U_{n+1}$ such that $U_{n+1} \subseteq \Strategy(x,
U_n)$. Then $S$ is a closed subset of $\prod_{n<\omega}
\Basis_n$, and thus $S$ is a complete metric space.
  
Now let $f:S \to X$ be the unique map such that
$\set{f(\seq{U_n}_{n<\omega})} = \bigcap_{n<\omega} U_n$. It
is easy to check that this is an open continuous mapping from
$S$ onto $X$, using the same technique as the proof of
Theorem~\ref{T:appl-oimage}. To see that $f$ is a compact
mapping, note that $f^{-1}(x)$ is a closed subset of $S_x$,
where
\[
S_x  =
\prod_{n<\omega} \set{U \in \Basis_n: x \in U}.
\]
 Because
each $\Basis_n$ is point-finite, $S_x$ is homeomorphic to a
product of finite discrete spaces and is thus compact.
\end{proof}

\bibliographystyle{amsalpha} 
\bibliography{choquet3}

\noindent \textbf{Fran{\c c}ois G. Dorais}\\
Department of Mathematics\\
University of Michigan\\
530 Church Street\\
Ann Arbor MI 48109\quad USA
\vspace{\baselineskip}

\noindent \textbf{Carl Mummert}\\
Department of Mathematics\\
Marshall University\\
1 John Marshall Drive\\
Huntington, WV 25755\quad USA

\end{document}